\documentclass{birkjour}
%
%
%
 \newtheorem{thm}{Theorem}[section]
 
 \newtheorem{lem}[thm]{Lemma}
 \newtheorem{prop}[thm]{Proposition}
 \theoremstyle{definition}
 \newtheorem{defn}[thm]{Definition}
 \theoremstyle{remark}
 \newtheorem{rem}[thm]{Remark}
 
 \newtheorem{open}[thm]{Open problem}
 \numberwithin{equation}{section}
\newcommand{\R}{\mathbb R}
\newcommand{\N}{\mathbb{N}}
\def\ds{\displaystyle}
\newcommand{\cH}{{\mathcal H}}
\def\eq#1{(\ref{#1})}
\newcommand{\innt}{\text{\rm int}}
\newcommand{\eps}{\varepsilon}

\begin{document}

%
%
%
%
%
%
%
%
%

\title[Nonlinear elliptic equations on Riemannian models]
 {Partial symmetry and existence\\ of least energy solutions to \\ some nonlinear elliptic equations \\ on Riemannian models}

\author{Elvise Berchio}

\address{%
Dipartimento di Scienze Matematiche \\ Politecnico di Torino\\
Corso Duca degli Abruzzi 24\\
 10129 Torino\\
  Italy}

\email{elvise.berchio@polito.it}

\author{Alberto Ferrero}
\address{Dipartimento di Scienze e Innovazione Tecnologica\\
Universit\`a del Piemonte Orientale ``Amedeo Avogadro''\\
 Viale Teresa Michel 11\\
 15121 Alessandria\\
  Italy}
\email{alberto.ferrero@mfn.unipmn.it}

\author{Maria Vallarino}

\address{%
Dipartimento di Scienze Matematiche\\
 Politecnico di Torino\\
Corso Duca degli Abruzzi 24\\
 10129 Torino\\
  Italy}

\email{maria.vallarino@polito.it}

\subjclass{Primary 35J20; Secondary 35B06; 58J05}
\keywords{Riemannian models, least energy solutions, partial symmetry.}

\date{January 1, 2004}

\begin{abstract}
We consider least energy solutions to the nonlinear equation $-\Delta_g u=f(r,u)$ posed on a class of Riemannian models $(M,g)$ of dimension $n\ge 2$ which include the classical hyperbolic space $\mathbb H^n$ as well as manifolds with unbounded sectional geometry. Partial symmetry and existence of least energy solutions is proved for quite general nonlinearities $f(r,u)$, where $r$ denotes the geodesic distance from the pole of $M$.
\end{abstract}

\maketitle

\section{Introduction}
Let $(M,g)$ be a $n$-dimensional Riemannian model ($n\geq 2$),
namely a manifold admitting a pole $o$ and whose metric is given,
in spherical coordinates around $o$, by
\begin{equation}\label{metric}
ds^2=dr^2+(\psi(r))^2d\Theta^2,\ \ r>0, \Theta\in {\mathbb S}^{n-1}\,,
\end{equation}
where $d\Theta^2$ denotes the canonical metric on the unit
sphere ${\mathbb S}^{n-1}$ and
\begin{itemize}
\item[$(H)$] $\psi$ is a $C^{\infty}$ nonnegative function on $[0,\infty)$, positive on $(0,\infty)$ such that
$\psi'(0)=1$ and $\psi^{(2k)}(0)=0$ for all $k\geq 0$.
\end{itemize}
These conditions on $\psi$ ensure that the manifold is smooth and
the metric at the pole $o$ is given by the euclidean metric
\cite[Chapter 1, 3.4]{P}. Then, by construction, $r:=d(x,o)$ is
the geodesic distance between a point $x$ whose coordinates are
$(r,\Theta)$ and $o$. \par
Let $\Delta_g$ denote the Laplace-Beltrami
operator on $M$. Our paper concerns least energy solutions to the equation
\begin{equation}\label{equa1}
-\Delta_g u=f(r,u)\ \ \ {\rm on}\ M\,.
\end{equation}
As a prototype of the nonlinearity think to
$f(r,u)=W(r)|u|^{p-1}u$, where $W$ is a suitable measurable
function and $1< p\leq \frac{n+2}{n-2}$ if $n\geq 3$ ($1<p$ if $n=2$), but most of the
results stated in the paper hold for more general $f$, see Section \ref{partial sym}. Nonlinear
elliptic equations like \eq{equa1} on manifolds with negative
sectional curvatures have been the subject of intensive research
in the past few years. Many papers are settled on the simplest
example of manifold with negative curvature: the hyperbolic space
$\mathbb{H}^n$, corresponding to $\psi(r)=\sinh r$ in \eq{metric}.
See \cite{BS,GS, GS2,mancini} and references therein, where
$f=f(u)=\lambda u+|u|^{p-1}u$ is chosen. In this case, a great
attention has been devoted to the study of radial solutions (non
necessarily in the energy class) either in $\mathbb{H}^n$
\cite{BK,BGGV,mancini} or in the more general Riemannian model \eq{metric} \cite{bfg}. See also \cite{Punzo} where fully nonlinear elliptic equations have been recently studied in the same setting \eq{metric}. It becomes then a natural and interesting subject of investigation the study of symmetry properties of solutions to \eqref{equa1}.

In the hyperbolic setting, radial symmetry of solutions has been proved in \cite{adg,mancini} for power-type nonlinearities and for positive
solutions in the energy class. See also \cite{cfms}. The results
in \cite{adg} hold for quite general nonlinearities $f=f(u)$ and
non-energy solutions are also dealt. Furthermore, their extension
to general manifolds is also discussed. In the wake of the seminal
paper \cite{gidas}, the proofs of the just mentioned results rely
on the moving plane method and strongly exploit the structure of
the space under consideration. Hence, their extension to general
manifolds seems quite difficult to be reached. In \cite{adg} this topic is addressed by requiring two kinds of assumptions: either group action properties, which generalize what happens in $\mathbb{R}^n$ and $\mathbb{H}^n$, or suitable foliation conditions.\par
Coming back to our Riemannian model \eq{metric}, the results in \cite{adg} only apply if $\psi(r)=r$ or $\psi(r)=\alpha^{-1} \sinh(\alpha r)$ ($\alpha>0$), namely to the euclidean and hyperbolic cases, see Open Problem \ref{o:symmetry}. It is therefore appropriate to investigate whether, at least, some
partial symmetry holds. In the present paper, under quite general
assumptions on $\psi$ and $f$, we prove
that ground states to \eq{equa1} are foliated Schwarz symmetric
with respect to some point (see Theorem \ref{main}). In particular, they are either radial
symmetric or axially symmetric. The same can be said for
corresponding Dirichlet boundary value problems (see Theorem
\ref{main dir}). We refer to \cite{castorina2} for related results about
Dirichlet problems on Riemannian models. We observe that our symmetry result admits nonlinearities of the type $f=f(r,u)$ with no monotonicity condition with respect to $r$. As far as we are aware, this case was not covered by previous works, not even in the hyperbolic space $\mathbb H^n$. We mention the paper \cite{ago} where symmetry was proved for the solutions to a Dirichlet problem posed on manifolds conformally equivalent to $\R^n$ and for nonlinearities $f=f(r,u)$ decreasing with respect to $r$.\par
Our result guarantees that, when they exist, least energy solutions to
\eqref{equa1} are foliated Schwarz symmetric. The problem of
existence of least energy solutions to \eq{equa1} with subcritical growth can be easily
handled if radial symmetry is a-priori assumed (see \cite{bfg}).
In this perspective, for instance, compactness is gained in \cite{marzuola2} by requiring suitable symmetry
properties of solutions. If no extra constrain is assumed, the loss of compactness may represent a serious obstacle to show existence. When $f=f(u)=\lambda u+|u|^{p-1}u$ and $M=\mathbb{H}^n$, existence of
least energy solutions has been independently proved in
\cite{marzuola0} and in \cite{mancini}. Both the proofs exploit
peculiar properties of $\mathbb{H}^n$ and can be hardly extended
to a more general setting. An important contribution in this direction is given in \cite{marzuola}  where existence is proved for power-type nonlinearities when the equation is posed on
a weakly homogenous space. We show in Subsection \ref{ex} that, under the
weakly homogeneity assumption, our Riemannian model reduces either to $\R^n$ or $\mathbb{H}^n$. Nevertheless, a thorough analysis of the peculiar structure of \eq{metric} allows us to obtain some compactness and finally to prove in Theorem \ref{compact 1} existence of least energy solutions to \eq{equa1} for
suitable families of $f$ and for quite general $\psi$. It is worth
noticing that Theorem \ref{compact 1} applies to Riemannian models
with unbounded sectional geometry (see Remark
\ref{r:example}).

The paper is organized as follows. In Section \ref{setting} we fix the notation and describe our geometric setting. Section \ref{partial sym} contains the main theorems: in Subsection \ref{partial-symmetry} we state the partial symmetry results and in Subsection \ref{ex} we state the existence results. All the proofs are given in Sections 4-7.

\section{Notation and geometric setting}\label{setting}

\subsection{Notation}

The following table summarizes most of the notation we shall use in the paper.
\begin{itemize}

\item[-] For any $P\in M$ and $U_1,U_2\in T_P M$ we denote by $\langle U_1,U_2\rangle_g$ the scalar product on $T_P M$ associated with the metric $g$.

\item[-] For any $P\in M$ and $U\in T_P M$ we denote by $|U|_g:=\sqrt{\langle U,U\rangle_g}$ the norm of the vector $U$.

\item[-] $V_g$ denotes the volume measure in $(M,g)$.

\item[-] $\nabla_g$ denotes the Riemannian gradient in $(M,g)$.

\item[-] For any $1\le q <\infty$ we define the Banach space
$$
L^q(M):=\{u:M\to \R: u \text{ is measurable and } \int_M |u|^q dV_g<+\infty\}
$$
endowed with the corresponding $L^q$-norm.

\item[-] For any $1\le q<\infty$ and any measurable function $W:M\rightarrow [0,\infty)$ we define the Banach space
$$
L^q(M;W):=\{u:M\to \R: u \text{ is measurable and } \int_M |u|^q \, W\,dV_g<+\infty\}
$$
endowed with the corresponding weighted $L^q$-norm.

\item[-] We denote by $H^1(M)$ the classical Sobolev space in $M$, i.e.
$$
H^1(M)=\{u\in L^2(M): |\nabla_g u|_g \in L^2(M)\}
$$
endowed with the usual $H^1$-norm.

\end{itemize}

\subsection{Geometric setting}
Let $(M,g)$ be an $n$-dimensional Riemannian model whose metric is defined by formula \eqref{metric} with a function $\psi$ satisfying condition (H).
Let $\omega_n$ be the area of the $n$-dimensional unit sphere. Then
\[
S(r)=\omega_n (\psi(r))^{n-1},\ \ \ V(r)=\int_0^rS(t)\,{\rm d}t=\omega_n\int_0^r(\psi(t))^{n-1}\,{\rm d}t
\]
represent, respectively, the area of the geodesic sphere $\partial B_r$ and the volume of the geodesic ball $B_r$, where $B_r$ denotes the geodesic ball centered at $o$ of radius $r$, i.e.
$$
B_r:=\{(s,\Theta): 0\le s<r \text{ and } \Theta\in{\mathbb S}^{n-1}\}\,.
$$
The Riemannian Laplacian of a scalar function $f$ on $M$ is given, in the above coordinates, by
$$
\Delta_g
f(r,\Theta)=\frac1{(\psi(r))^{n-1}}\frac{\partial}{\partial
r}\left[(\psi(r))^{n-1} \frac{\partial f}{\partial
r}(r,\Theta)
\right]+\frac1{(\psi(r))^2}\Delta_{{\mathbb
S}^{n-1}}f(r,\Theta),
$$
where $\Delta_{{\mathbb S}^{n-1}}$ is the Riemannian Laplacian on the unit sphere ${\mathbb S}^{n-1}$.
In particular, for \it radial \rm functions, namely functions depending only on $r$, one has
\[
\Delta_g f(r)=\frac1{(\psi(r))^{n-1}}\left[(\psi(r))^{n-1}
f^\prime(r)\right]^\prime=f^{\prime\prime}(r)+(n-1)\frac{\psi^\prime(r)}{\psi(r)}f^\prime(r),
\]
where from now on a prime will denote, for radial functions,
derivative w.r.t. $r$. By standard arguments we deduce that the bottom of the $L^2$ spectrum of
$-\Delta_g$ in $M$ admits the following variational
characterization:
\begin{equation} \label{lambda1}
\lambda_1(M):= \inf_{\varphi \in C^{\infty}_{c}(M)\setminus
\{0\}}\frac{\int_{M} |\nabla_g \varphi |_{g}^2\,dV_{g}}{\int_{M}
\varphi^2\,dV_{g}}\,.
\end{equation}
Notice that if $\liminf_{r\rightarrow
+\infty}\frac{\psi'(r)}{\psi(r)}>0$, then $\lambda_1(M)>0$ (see \cite[Lemma 4.1]{bfg}). Let $(F_j)_{j=1}^n$ be a orthonormal frame on $(M,g)$, where
$F_1,...,F_{n-1}$ correspond to the spherical coordinates and
$F_n$ corresponds to the radial coordinate. The curvature
operator, the Ricci curvature and the scalar curvature can be
computed in terms of $\psi$ (see \cite[Chapter 3, 2.3]{P},
\cite[p.3]{BD}). The curvature operator is given by
$$
\mathcal R(F_i\wedge F_j)=-\frac{(\psi')^2-1}{\psi^2}F_i\wedge F_j\qquad 1\leq i,j\leq n-1\,,
$$
$$
\mathcal R(F_i\wedge F_n)=-\frac{\psi''}{\psi}F_i\wedge F_n\qquad 1\leq i\leq n-1\,.
$$
The Ricci curvature is given by
$$
Ric(F_n)=-(n-1)\frac{\psi''}{\psi}F_n\quad Ric(F_i)=\Big(- (n-2)\frac{(\psi')^2-1}{\psi^2} -\frac{\psi''}{\psi}\Big)F_i\,,
$$
where $1\leq i\leq n-1$, and the scalar curvature is
\begin{equation} \label{eq:scalar}
K=-2(n-1)\frac{\psi''}{\psi}-(n-1)(n-2)\frac{(\psi')^2-1}{\psi^2} \,.
\end{equation}
Hence, if
\begin{equation}\label{Riccibounded}
\sup_{r\geq 0} \frac{(\psi'(r))^2-1}{\psi^2(r)} <\infty \quad {\rm{and}}\quad \sup_{r\geq 0} \frac{\psi''(r)}{\psi(r)}<\infty\,,
\end{equation}
then the Ricci curvature of $(M,g)$ is bounded from below, i.e. there exists a real number $\kappa$ such that $Ric\geq -\kappa^2$. Assume furthermore that
\begin{equation}\label{sec bound}
\frac{(\psi'(r))^2-1}{\psi^2(r)}\geq 0 \quad {\rm{and}}\quad  \frac{\psi''(r)}{\psi(r)}\geq 0\,,
\end{equation}
then all the sectional curvatures are nonpositive and, by Hadamard Theorem \cite[Theorem 3.1]{do}, the
injectivity radius of $(M,g)$ is $+\infty$. Finally, if $(H)$, \eq{Riccibounded} and \eq{sec bound} hold, by \cite[Theorem 2.21]{au} the following Sobolev
embedding holds:
\begin{equation}\label{cmctemb}
H^1(M)\subset L^{2^*}(M)\,,
\end{equation}
where $2^*$ denotes the critical
Sobolev exponent given by
$$
2^*=\begin{cases}
2n/(n-2)&{\rm{if}}\,\,n\geq 3\\
\infty&{\rm{if}}\,\,n=2\,.
\end{cases}
$$


\section{Main Results}  \label{partial sym}

\subsection{Partial symmetry of least energy solutions}
\label{partial-symmetry}

Throughout this section $M$ denotes a $n$-dimensional Riemannian
model, $n\ge 2$, with the function $\psi$ in \eqref{metric}
satisfying (H). Consider the equation
\begin{equation}\label{equa}
-\Delta_g u=f(r,u)\ \ \ {\rm on}\ M\,,
\end{equation}
where $r=d(x,o)$ and $f\in C^1([0,\infty) \times \R)$ is a function satisfying the following conditions:
\begin{itemize}
\item[$(f1)$] $f(r,0)=0$ for any $r>0$ and, denoting by $f'_s(r,s)$ the derivative with respect to $s$, we have
$$
0 \le f'_s(r,s) \leq C(1+W(r)|s|^{p-1}) \qquad \text{for any } r>0 \ \text{and} \ s\in \R \, ,
$$
where $1<p\leq 2^*-1$, $C$ is a positive constant, $W: [0,\infty)\rightarrow [0,\infty)$ is such that $W\in L^\infty_{\rm loc}\big([0,\infty)\big)$ and $H^1(M)$ is continuously embedded into $L^{p+1}(M;W)$. Notice that $W$ denotes the radial function on $M$ defined by $W(x)=W\big(d(x,o)\big)$;

\item[$(f2)$] $f(r,s)s-f'_s(r,s)s^2<0$ for any $r>0$ and $s\neq 0$.


\end{itemize}
Many situations are allowed by assumptions (H) on $\psi$ and $(f1)-(f2)$ on $f$. For example, if we further assume that $\psi$ satisfies
\eqref{Riccibounded}-\eqref{sec bound}, then thanks to \eq{cmctemb} we may choose $f(r,s)=|s|^{p-1}s$. On the other hand, if we do not require any further restriction on $\psi$, we can always find a suitable function $f$ satisfying $(f1)-(f2)$, as one can see from Remark \ref{r:example} (where examples of manifolds with unbounded negative sectional curvatures are given). Thanks to $(f1)-(f2)$, we may define the action functional associated with \eqref{equa}
\begin{equation} \label{eq:Phi}
\Phi (v)= \frac{1}{2} \int_{M} |\nabla_g v |_{g}^2\,dV_{g}-\int_M F(r,v)\,dV_{g} \qquad \text{for any }  v\in H^1(M) \, ,
\end{equation}
where $F(r,s):=\int_0^s f(r,t)\, dt$, and the Nehari manifold
\begin{equation} \label{eq:Nehari}
\mathcal{N}:=\Big\{v\in H^1(M)\setminus\{0\} \, : \, \int_{M} |\nabla_g v |_{g}^2\,dV_{g}=\int_{M} f(r,v) \, v \, dV_{g} \Big\} \, .
\end{equation}
We say that $u\in \mathcal N$ is a \emph{least energy} or
\emph{ground state} solution to \eq{equa} if it achieves the following
infimum
\begin{equation} \label{eq:c}
c=\inf_{v\in \mathcal{N}} \Phi(v) \,.
\end{equation}
By $(f1)-(f2)$ the set $\mathcal N$ is a natural constraint in the sense that every constraint stationary point of the functional $\Phi$ is a ``free stationary point'' of $\Phi$ itself, i.e.
\begin{equation} \label{eq:weak-solution}
\int_M \langle \nabla_g u, \nabla_g v\rangle_g \, dV_g=\int_M f(r,u)v \, dV_g \qquad \text{for any } v\in H^1(M) \, .
\end{equation}
We also observe that assumption $(f2)$ yields $\Phi(v)>0$ for every $v\in \mathcal N$ so that by \eqref{eq:c} $c\ge 0$ and if it is achieved by some function $u\in \mathcal N$ then $c>0$, see the proof of Lemma \ref{positivity}.
\begin{defn} \label{d:foliated} A continuous function $u:M\to \R$ is foliated
Schwarz symmetric with respect to some $\Theta_0\in {\mathbb S}^{n-1}$ if the
value of $u$ at $(r,\Theta)\in M$ only depends on $r$ and
$\sigma=\arccos(\Theta \cdot \Theta_0)$, where $\cdot$ denotes the
standard scalar product in $\mathbb S^{n-1}$.
\end{defn}
We state our main symmetry result. 
\begin{thm}\label{main}
 Let $(M,g)$ be the Riemannian model defined by \eqref{metric} with $\psi$ satisfying assumption $(H)$.
 Let $f:M\times \R\rightarrow \R$ be a $C^1$ function satisfying assumptions $(f1)-(f2)$.
 Then, any least energy solution $u$ to \eqref{equa} is foliated Schwarz symmetric with respect
 to some $\Theta_0\in {\mathbb S}^{n-1}$ and strictly of one sign in $M$. Moreover, either $u$ is radial or $u$ is strictly decreasing with respect to $\sigma=\arccos(\Theta \cdot \Theta_0)\in [0,\pi]$ for any $r>0$ when $u>0$ in $M$
 (respectively strictly increasing with respect to $\sigma=\arccos(\Theta \cdot \Theta_0)\in [0,\pi]$ for any $r>0$ when $u<0$ in $M$).
\end{thm}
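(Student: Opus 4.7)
The plan is to exploit the rotational symmetry of the model $(M,g)$ about the pole $o$ via polarization, combined with the strong maximum principle.

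First I would show that any least energy solution $u$ has strict sign. Assumption $(f1)$ together with $f(r,0)=0$ gives $f(r,s)s\ge 0$, while $(f2)$ is equivalent to $s\mapsto f(r,s)/s$ being strictly monotone away from $0$; this yields $F(r,s)<\tfrac12 f(r,s)s$ for $s\neq 0$, and hence $\Phi>0$ on $\mathcal N$. Testing \eqref{equa} against $u^{\pm}$ shows that if $u$ changes sign then both $u^{+}$ and $-u^{-}$ lie in $\mathcal N$, forcing $c=\Phi(u)=\Phi(u^{+})+\Phi(-u^{-})\ge 2c$ and contradicting $c>0$. Up to a sign change, $u\ge 0$, and the strong maximum principle applied to $-\Delta_g u-\bigl(f(r,u)/u\bigr)u=0$ (whose coefficient is bounded by $(f1)$) then gives $u>0$ on $M$.

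Next, for each $p\in{\mathbb S}^{n-1}$ the Euclidean reflection $\sigma_p(\Theta):=\Theta-2(\Theta\cdot p)p$ of ${\mathbb S}^{n-1}$ lifts via spherical coordinates to a Riemannian isometry $T_p:M\to M$, $T_p(r,\Theta)=(r,\sigma_p\Theta)$, fixing $o$ and leaving both $r$ and $V_g$ invariant. Set $U_p:=\{(r,\Theta)\in M:\Theta\cdot p>0\}$ and define the polarization $u_p$ by $u_p(x):=\max\{u(x),u(T_p x)\}$ on $U_p$ and $u_p(x):=\min\{u(x),u(T_p x)\}$ on $M\setminus\overline{U_p}$. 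Because $T_p$ is a smooth isometry of $(M,g)$ and the integrands $F(r,s)$ and $f(r,s)s$ depend only on the $T_p$-invariant coordinate $r$, the standard polarization identities extend to the manifold and yield $\int_M|\nabla_g u_p|_g^2\,dV_g=\int_M|\nabla_g u|_g^2\,dV_g$, $\int_M F(r,u_p)\,dV_g=\int_M F(r,u)\,dV_g$ and $\int_M f(r,u_p)u_p\,dV_g=\int_M f(r,u)u\,dV_g$. Consequently $u_p\in\mathcal N$ with $\Phi(u_p)=c$, and since $\mathcal N$ is a natural constraint $u_p$ itself solves $-\Delta_g u_p=f(r,u_p)$ on $M$.

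Finally, on $U_p$ the difference $w:=u_p-u\ge 0$ vanishes on $\partial U_p$ (where $T_p$ is the identity) and satisfies the linear equation $-\Delta_g w=c_p(x)w$ with $c_p(x):=\int_0^1 f'_s(r,u+tw)\,dt\in L^{\infty}_{\mathrm{loc}}(M)$ by $(f1)$. The strong maximum principle produces the alternative: either $w\equiv 0$ on $U_p$, so $u\ge u\circ T_p$ throughout $U_p$, or $w>0$ on $U_p$, so $u<u\circ T_p$ strictly. This dichotomy, valid for every equatorial half-space $U_p$, is the classical characterization of foliated Schwarz symmetry (Brock, Smets--Willem, Weth): it singles out an axis $\Theta_0\in{\mathbb S}^{n-1}$ with respect to which $u$ depends only on $(r,\sigma)$ with $\sigma=\arccos(\Theta\cdot\Theta_0)$, and the strict case for every $p$ not collinear with $\Theta_0$ forces strict monotonicity in $\sigma$. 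The main technical hurdle is the verification of the polarization identities in the Riemannian setting; this is where the rotational symmetry of the model metric \eqref{metric} around $o$ is indispensable, since a generic Riemannian manifold would not admit the reflections $T_p$ as isometries.
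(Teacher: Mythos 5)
Your strategy coincides with the paper's: strict sign via the decomposition $u=u^{+}-u^{-}$ on the Nehari manifold plus the strong maximum principle, polarization with respect to the reflections of $M$ fixing the pole, invariance of $\int_M|\nabla_g\cdot|_g^2\,dV_g$, $\int_M F(r,\cdot)\,dV_g$ and $\int_M f(r,\cdot)\,\cdot\,dV_g$ under polarization (so that the polarized function is again a least energy solution, hence solves \eqref{equa}), and the characterization of foliated Schwarz symmetry through polarizations. The sign argument and the polarization identities are carried out correctly and correspond to the paper's Lemmas \ref{positivity} and \ref{strict-alt}.

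There is, however, a gap in the final step. With your choice $w=u_p-u=(u\circ T_p-u)^{+}$ on $U_p$, the strong maximum principle only yields the dichotomy ``$u\ge u\circ T_p$ on $U_p$'' versus ``$u<u\circ T_p$ on $U_p$''; it cannot separate the cases $u\equiv u\circ T_p$ and $u>u\circ T_p$ on $U_p$, and that is precisely the distinction on which the theorem's final alternative (radial versus strictly monotone in $\sigma$) hinges. The paper instead applies the maximum principle to $w=|u-u\circ p_H|=2u_H-(u+u\circ p_H)$, which is $C^2$ because $u_H$ is itself a least energy solution, and satisfies $-\Delta_g w=[f(r,u_H)-f(r,u)]+[f(r,u_H)-f(r,u\circ p_H)]\ge 0$ on $H$ by the monotonicity of $f$ in $s$ from $(f1)$; this produces the full trichotomy of Lemma \ref{strict-alt}. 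Moreover, even granting the trichotomy, your closing sentence merely asserts that ``the strict case forces strict monotonicity'' without treating the degenerate case in which $u\equiv u\circ T_p$ for some reflection whose fixed hyperplane is not orthogonal to $\Theta_0$. That case is exactly where radiality must be established, and it requires the iterative angle-doubling argument of Girao--Weth used in the paper: if $u(r,\cdot)$ is constant on the cap $\sigma\le 2\sigma_0$, then the equality case propagates to every reflection forming an angle less than $2\sigma_0$ with $\Theta_0$, hence to $\sigma\le\min\{4\sigma_0,\pi\}$, and after finitely many steps to all of $[0,\pi]$. Without this step the ``either radial or strictly decreasing'' conclusion is asserted but not proved.
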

Theorem \ref{main} nothing says about existence of least energy
solutions. Nevertheless, when they exist, least energy solutions
$u$ to \eq{equa} are axially symmetric with respect to the axis
$\R \Theta_0 \subset \R^n$ and $u=u(r,\sigma)$. When $M={\mathbb
H}^n$ and $f=f(u)$, it is known that $u$ does not depend on $\sigma$, namely $u$ is
radial, see \cite{mancini} and \cite{adg}. For general manifolds,
a first step in this direction is made by Proposition \ref{separa}
below.

\begin{prop}\label{separa}
Under the same assumptions of Theorem \ref{main}, let $u \in H^{1}(M)\setminus\{0\}$ be a nonradial least energy solution to \eqref{equa}. If the map $t\mapsto \frac{f(x,t)}{t}$
is strictly increasing and locally Lipschitz then $u$ does not admit a representation of the type $u(r,\sigma)=R(r)h(\sigma)$ where $R:[0, +\infty)\rightarrow
\R$ and $h:[0,\pi]\rightarrow \R$.
\end{prop}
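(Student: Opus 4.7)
I would argue by contradiction. Assume $u(r,\sigma)=R(r)\,h(\sigma)$ for $r>0$ and $\sigma\in[0,\pi]$; nonradiality of $u$ is equivalent to $h$ being nonconstant. By Theorem \ref{main}, the least energy solution $u$ is strictly of one sign on $M$, so without loss of generality $u>0$ everywhere and in particular $u(o)>0$ at the pole. Standard elliptic regularity applied to \eqref{equa} (using $(f1)$) yields $u\in C^\infty(M)$, so $u$ is continuous at $o$.

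The first reduction is that $h$ is nowhere zero on $[0,\pi]$: if $h(\sigma_0)=0$ for some $\sigma_0$, then $u(r,\sigma_0)=R(r)\cdot 0=0$ along the entire geodesic $\sigma=\sigma_0$, contradicting the strict positivity of $u$. After possibly changing the global signs of $R$ and $h$ (leaving their product unchanged) I may therefore assume $R,h>0$ on their respective domains.

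The core step is continuity of $u$ at the pole. Fixing any $\sigma_0\in[0,\pi]$, the identity $R(r)=u(r,\sigma_0)/h(\sigma_0)$ shows $R$ is continuous on $(0,\infty)$ with
\[
\lim_{r\to 0^+} R(r) \,=\, \frac{u(o)}{h(\sigma_0)} \,>\, 0 .
\]
Since the same limit must also equal $u(o)/h(\sigma)$ for every $\sigma\in[0,\pi]$, the function $h$ must be constant on $[0,\pi]$, contradicting nonradiality.

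I expect the only real subtlety to lie in the inputs needed from elsewhere, namely the strict positivity and continuity at $o$ of $u$, both supplied by Theorem \ref{main} and elliptic regularity. The strict monotonicity and local Lipschitz character of $t\mapsto f(x,t)/t$ is not consumed by this short argument; it becomes essential, however, in an alternative separation-of-variables proof in which one divides \eqref{equa} by $u>0$ to produce the identity $f(r,R(r)h(\sigma))/(R(r)h(\sigma))=A(r)+\psi^{-2}(r)D(\sigma)$ with $D(\sigma):=-\Delta_{{\mathbb S}^{n-1}}h(\sigma)/h(\sigma)$, and uses the monotonicity to infer that $D$ depends on $\sigma$ only through $h(\sigma)$, eventually forcing $h$ to be a positive axial eigenfunction of $-\Delta_{{\mathbb S}^{n-1}}$ on $[0,\pi]$ and hence constant.
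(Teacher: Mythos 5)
Your argument is correct, and it reaches the conclusion by a genuinely different and more elementary route than the paper. The paper follows \cite[Remark 6.3]{girao-weth}: it divides \eqref{equa} by $u=R(r)h(\sigma)>0$ to separate variables, uses Theorem \ref{main} to know that $h$ is strictly monotone in $\sigma$, and then invokes precisely the strict monotonicity and local Lipschitz continuity of $t\mapsto f(r,t)/t$ to compare the angular term $b(\sigma)$ at two angles; this produces the lower bound $R(r)\ge C\psi^{-2}(r)$ on $(0,\overline r)$, which blows up as $r\to 0^+$ and contradicts the fact that $u$ is a classical (hence bounded near the pole) solution by Lemma \ref{positivity}. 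You instead observe that all the rays $\sigma=\mathrm{const}$ collapse at the pole, where $u$ is continuous and nonvanishing (both facts supplied by Lemma \ref{positivity}; $C^2$ regularity suffices, you do not need $C^\infty$), so that $R(r)=u(r,\sigma_0)/h(\sigma_0)$ has a single limit $u(o)/h(\sigma_0)$ independent of $\sigma_0$, forcing $h$ to be constant. Both proofs ultimately derive the contradiction from the behaviour at the pole, but yours uses only the product structure of the representation together with $u(o)\neq 0$, and in particular never consumes the extra hypothesis on $f(x,t)/t$ nor the monotonicity of $h$; this shows the statement, as literally formulated on all of $(0,\infty)\times[0,\pi]$, holds under weaker assumptions. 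What the paper's PDE-based computation buys is quantitative information (the rate $\psi^{-2}(r)$ at which $R$ would have to blow up), which is the part of the argument that would survive if the separated representation were only assumed away from the pole, where your continuity argument gives nothing.
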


The same proof of Theorem \ref{main} yields foliated Schwarz
symmetry of least energy solutions to the Dirichlet problem
\begin{equation}
\label{dir}
\left \{ \begin{array}{ll}
-\Delta_g u=f(r,u) & \mbox{in }B_R\\
u=0 & \mbox{on }\partial B_R\,,
\end{array}
\right.
\end{equation}
where $R>0$ is fixed.
Indeed, by simply replacing $c$ in the proof of Theorem \ref{main}  with
$$c_0=\inf_{u\in \mathcal{N}_0} \Phi(u)\,,$$
where $\mathcal{N}_0:=\{u\in H_0^1(B_R)\setminus\{0\}\,:\,\int_{B_R}
|\nabla_g u |_{g}^2\,dV_{g}=\int_{B_R} f(r,u)\,u\,dV_{g} \}\,$, we get

\begin{thm}\label{main dir}
Let $(M,g)$ be the Riemannian model defined by \eqref{metric} with
$\psi$ satisfying assumption $(H)$. Let $f:M\times \R\rightarrow
\R$ be a $C^1$ function satisfying assumptions $(f1)-(f2)$.
Then, any least energy solution $u$ to \eqref{dir} is foliated Schwarz
symmetric with respect to some $\Theta_0\in {\mathbb S}^{n-1}$ and strictly of one sign in $B_R$. Moreover, either $u$ is radial or $u$ is strictly decreasing with respect to $\sigma=\arccos(\Theta \cdot \Theta_0)\in [0,\pi]$ for any $r\in (0,R)$ when $u>0$ in $B_R$ (respectively strictly increasing with respect to $\sigma=\arccos(\Theta \cdot \Theta_0)\in [0,\pi]$ for any $r\in (0,R)$ when $u<0$ in $B_R$).
\end{thm}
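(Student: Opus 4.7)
The proof follows, essentially verbatim, the strategy used for Theorem~\ref{main}, with $(M,H^{1}(M),\mathcal N,c)$ systematically replaced by $(B_R,H^{1}_{0}(B_R),\mathcal N_{0},c_{0})$. What makes the substitution work is that $B_R$ is invariant under every reflection $\sigma_H$ through a totally geodesic hyperplane passing through the pole $o$: each such $\sigma_H$ is an isometry of $(M,g)$ fixing the radial distance $r=d(\cdot,o)$, and therefore preserves both the ball $B_R$, its boundary $\partial B_R$ (hence the Dirichlet condition), and the nonlinearity $f(r,\cdot)$.

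The plan unfolds in three steps. First, I would show that every minimizer $u\in\mathcal N_0$ has strict constant sign in $B_R$. Indeed, $(f1)$ forces $f(r,\cdot)$ to have the same sign as its argument, so if $u$ were sign-changing then each of $u^+$ and $u^-$ would admit a positive rescaling in $\mathcal N_0$, and $(f2)$ would give $\Phi(u)=\Phi(u^+)+\Phi(u^-)\ge 2c_0>c_0$, a contradiction; the strong maximum principle applied to $-\Delta_g u=(f(r,u)/u)\,u$ then yields strict positivity or negativity. Second, for every open half-space $H\subset\R^n$ whose boundary contains $o$, I would introduce the polarization $u^H$ of $u$. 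Since $\sigma_H$ is an isometry preserving $B_R$ and $r$, the polarization $u^H$ belongs to $H^1_0(B_R)$ and satisfies
\[
\int_{B_R}|\nabla_g u^H|_g^2\,dV_g=\int_{B_R}|\nabla_g u|_g^2\,dV_g,\qquad \int_{B_R}F(r,u^H)\,dV_g=\int_{B_R}F(r,u)\,dV_g,
\]
so $u^H\in\mathcal N_0$ with $\Phi(u^H)=c_0$ and, by the natural-constraint property recorded in \eqref{eq:weak-solution}, $u^H$ is itself a least energy solution of \eqref{dir}. Third, the nonnegative difference $w:=u^H-u$ on the half-ball $B_R\cap H$ solves a linear equation $-\Delta_g w=q(x)\,w$ with $q\in L^\infty_{\mathrm{loc}}$ (by the mean value theorem together with $(f1)$), and the strong maximum principle produces the dichotomy: either $w\equiv 0$ in $B_R\cap H$ (i.e.\ $u\circ\sigma_H=u$) or $w>0$ strictly. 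Ranging $H$ over all half-spaces through $o$ yields, exactly as in Theorem~\ref{main}, foliated Schwarz symmetry about some $\Theta_0\in\mathbb S^{n-1}$, together with the stated strict monotonicity in $\sigma$.

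The one point requiring verification beyond the proof of Theorem~\ref{main} is the compatibility of polarization with the Dirichlet condition, and this is immediate from $\sigma_H(\partial B_R)=\partial B_R$. The genuinely delicate step, as in the whole-space case, is the comparison in Step~3, where the strong maximum principle on $B_R\cap H$ must be invoked using the boundary information on both the flat face $\partial H\cap B_R$ (where $u=u^H$ by the very definition of the polarization) and the curved face $\partial B_R\cap H$ (where $u=u^H=0$); this is handled exactly as in the proof of Theorem~\ref{main}.
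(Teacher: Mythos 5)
Your proposal is correct and follows essentially the same route as the paper, which proves Theorem~\ref{main dir} precisely by rerunning the proof of Theorem~\ref{main} with $c$, $\mathcal N$, $H^1(M)$ replaced by $c_0$, $\mathcal N_0$, $H^1_0(B_R)$; your added verification that polarization respects the Dirichlet condition (since the reflections $p_H$ fix $r$ and hence preserve $\partial B_R$) is exactly the point the paper leaves implicit. The only cosmetic difference is in the sign step: the paper tests the weak equation with $u^{+}$ and $u^{-}$ to place them directly on $\mathcal N_0$ (no rescaling needed), which is slightly cleaner than invoking a rescaling, but your conclusion $\Phi(u)\ge 2c_0$ is reached all the same.
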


Let $H_{r}^1(M)$ denote the set of functions $u\in H^1(M)$ which
are radially symmetric. We define $c_r:=\inf_{u\in \mathcal{N}\cap
H_{r}^1(M)} \Phi(u)\,.$ Then, $c\leq c_r$. When $c<c_r$, least
energy solutions (if they exist) must be nonradial and by Theorem
\ref{main} they are axially symmetric. The same hold for $c_0$ and
$c_{0,r}:=\inf_{u\in \mathcal{N}_0\cap H_{r}^1(M)} \Phi(u)\,.$ If
$M=\mathbb{H}^n$ and $f(r,u)=r^{\alpha}|u|^{p-1}u$, it has been
recently proved in \cite{he} that $c_0<c_{0,r}$ as $p\rightarrow
2^*-1$. In this case, Theorem \ref{main dir} gives a
sharp information.

\begin{open} \label{o:symmetry}
{\rm When the nonlinearity depends on $r$, our symmetry result can be considered optimal in the sense explained above. One may ask if something more can be said about symmetry of least energy solutions of \eqref{equa} when $f$ only depends on $u$. In \cite{adg}, radial symmetry about some point is obtained for positive solutions of \eqref{equa} when $M=\mathbb H^n$ and $f$ is a suitable nonlinearity depending only on $u$. Here, by radial symmetry of a function $u:M\to \R$ with respect to a point $x_0\in M$, which is not necessarily the pole of the Riemannian model, we mean that $u$ is constant on geodesic balls centered at $x_0$. We leave as an open problem to show if radial symmetry about some point occurs also in more general Riemannian models, at least for ground state solutions.
 Note that the assumptions on $(M,g)$ introduced in \cite[Section 4]{adg} are too restrictive in the setting of our Riemannian models. Indeed, they would imply that $M$ has constant scalar curvature and then $\psi(r)=r$ or $\psi(r)=\alpha^{-1}\sinh(\alpha r)$, i.e. $(M,g)$ is either the euclidean space or an hyperbolic space whose scalar curvature is $-n(n-1)\alpha^2$ (see also Proposition \ref{p:constant-K}).
\endproof
}
\end{open}

\subsection{Existence of least energy solutions}\label{ex}

In Subsection \ref{partial-symmetry} we discussed partial
symmetry of least energy solutions for the equation \eqref{equa}
under the assumptions $(f1)-(f2)$. Here we address the problem of existence
of least energy solutions of \eqref{equa}. As already mentioned
in the Introduction, one way to gain existence of least
energy solutions is to assume that the manifold $M$ satisfies the
so-called weakly homogeneity condition, see \cite{marzuola}.
For completeness we recall here its definition.

\begin{defn} \label{d:weakly-hom} Let $(M,g)$ be a $n$-dimensional complete Riemannian manifold. We say that $M$ is weakly homogeneous
if there exists a group $\Gamma$ of isometries of $M$ and $D>0$ such
that for every $x,y\in M$, there exists $\gamma\in \Gamma$ such that
$d\big(x,\gamma(y)\big)\leq D$. Here $d$ represents the geodesic distance in
$M$.
\end{defn}
Assume that $(M,g)$ is a Riemannian complete weakly homogeneous manifold (not
necessarily a Riemannian model in the sense of \eqref{metric}) with bounded geometry. Proceeding as in \cite{marzuola}, where the pure power equation is dealt, we prove existence of least energy solutions
for the following class of equations
\begin{equation} \label{eq:V(x)}
-\Delta_g u=V(x)u+h(u) \qquad \text{in } M
\end{equation}
with $V$ and $h$ satisfying suitable assumptions. Similarly to what we did for \eqref{equa} we may define the Nehari manifold associated with \eqref{eq:V(x)} by
\begin{equation*}
\mathcal N:=\left\{v\in H^1(M)\setminus\{0\}: \int_M |\nabla_g v|_g^2 \, dV_g-\int_M V(x) v^2 \, dV_g-\int_M h(v)v \, dV_g \right\}
\end{equation*}
and the least energy solutions of \eqref{eq:V(x)} as the solutions $u\in \mathcal N$ of \eqref{eq:V(x)} satisfying
\begin{equation} \label{eq:Nehari-V}
\ds{\Phi(u)=c:=\inf_{v\in \mathcal{N}} \Phi(v)}
\end{equation}
where
$$\Phi(v):=\frac 12 \int_M \left[|\nabla_g v|_g^2-V(x)v^2 \right]\, dV_g-\int_M H(v)\, dV_g$$
and $H(s):=\int_0^s h(t)\, dt$ for any $s\in\R$.

\begin{prop} \label{p:existence}
Let $(M,g)$ be a Riemannian complete weakly homogeneous manifold of dimension $n\ge 2$ with positive injectivity radius and bounded curvature. Let $V\in C^0(M)$ satisfy
\begin{equation} \label{eq:bounds-V}
-\infty<V_\infty:=\inf_{x\in M} V(x)\le \sup_{x\in M} V(x)<\lambda_1(M)
\end{equation}
with $\lambda_1(M)$ as in \eqref{lambda1} and
\begin{equation} \label{eq:V(infty)}
\text{for any fixed } o\in M \quad \lim_{d(x,o)\to +\infty} V(x)=V_\infty
\end{equation}
where $d(\cdot,\cdot)$ denotes the geodesic distance in $(M,g)$. Furthermore, suppose that $h\in C^1(\R)$ satisfies
\begin{equation} \label{eq:subcrit}
h(0)=0 \, , \quad h'(0)=0 \, , \quad |h'(s)|\le C(1+|s|^{p-1}) \quad \text{for any } s\in\R
\end{equation}
with $1<p<2^*-1$,
\begin{equation} \label{eq:N-nonempty}
h(s)s-h'(s)s^2<0 \qquad \text{for any } s\neq 0\, ,
\end{equation}
and that there exists $\mu>2$ such that
\begin{equation} \label{eq:AR-h}
0<\mu H(s)\le h(s)s \qquad \text{for any } s\neq 0 \, .
\end{equation}
Then \eqref{eq:V(x)} admits a least energy solution in the sense of \eqref{eq:Nehari-V}.
\end{prop}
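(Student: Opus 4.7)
The plan follows the template of \cite{marzuola} for pure-power nonlinearities on weakly homogeneous manifolds, with two adaptations: absorbing the linear term $V(x)u$ into the quadratic part of $\Phi$, and comparing the original problem with the asymptotic problem obtained by replacing $V$ by $V_\infty$.

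First, I would set up the variational framework. The assumption $\sup_M V<\lambda_1(M)$ combined with \eqref{lambda1} ensures that $\|v\|_V^2:=\int_M(|\nabla_g v|_g^2-V(x)v^2)\, dV_g$ is an equivalent norm on $H^1(M)$. Conditions \eqref{eq:subcrit}, \eqref{eq:N-nonempty}, \eqref{eq:AR-h}, together with the Sobolev embedding $H^1(M)\hookrightarrow L^{p+1}(M)$ provided by bounded geometry, make $\mathcal N$ a nonempty $C^1$ submanifold of $H^1(M)$; for each $v\neq 0$ there is a unique $t(v)>0$ with $t(v)v\in \mathcal N$, and \eqref{eq:AR-h} gives $c>0$. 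For any minimizing sequence $(u_n)\subset \mathcal N$, the Ambrosetti--Rabinowitz identity
\[
\Phi(u_n)=\Phi(u_n)-\frac{1}{\mu}\langle \Phi'(u_n),u_n\rangle\geq \Big(\frac{1}{2}-\frac{1}{\mu}\Big)\|u_n\|_V^2
\]
provides $H^1$-boundedness.

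Next, I would recover compactness modulo translations. Bounded geometry yields a uniformly locally finite cover of $M$ by geodesic balls of a fixed radius $\rho$, with uniform local Sobolev constants. A Lions-type vanishing lemma on this cover shows that $\sup_{x\in M}\int_{B(x,\rho)}u_n^2\, dV_g\to 0$ would imply $u_n\to 0$ in $L^{p+1}(M)$, contradicting $u_n\in \mathcal N$ and $c>0$. Hence there exist $\delta>0$ and $x_n\in M$ with $\int_{B(x_n,\rho)}u_n^2\, dV_g\geq \delta$. By Definition \ref{d:weakly-hom}, fix $o\in M$ and pick $\gamma_n\in \Gamma$ with $d(x_n,\gamma_n(o))\leq D$; set $\tilde u_n:=u_n\circ \gamma_n$. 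Since $\gamma_n$ is an isometry, $\tilde u_n$ is bounded in $H^1$, lies on the Nehari manifold of the shifted functional with potential $V_n:=V\circ \gamma_n$, has the same energy as $u_n$, and $\int_{B(o,\rho+D)}\tilde u_n^2\, dV_g\geq \delta$. Passing to a subsequence, $\tilde u_n\rightharpoonup \tilde u$ weakly in $H^1$ and strongly in $L^q_{\mathrm{loc}}$ for $1\leq q<2^*$, with $\tilde u\neq 0$.

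I would close the argument via a dichotomy on $\{\gamma_n(o)\}$. If this sequence stays bounded, then up to a convergent subsequence $V_n\to V$ locally uniformly, and a Brezis--Lieb decomposition together with weak lower semicontinuity on the Nehari manifold yields $\tilde u\in \mathcal N$ and $\Phi(\tilde u)\leq \liminf\Phi(\tilde u_n)=c$, so $\tilde u$ is a least energy solution. If instead $d(\gamma_n(o),o)\to\infty$, hypothesis \eqref{eq:V(infty)} forces $V_n\to V_\infty$ locally uniformly, so $\tilde u$ solves the asymptotic equation and $\Phi_\infty(\tilde u)\geq c_\infty$, where $c_\infty:=\inf_{\mathcal N_\infty}\Phi_\infty$ for
\[
\Phi_\infty(v):=\frac{1}{2}\int_M(|\nabla_g v|_g^2-V_\infty v^2)\, dV_g-\int_M H(v)\, dV_g.
\]
A Brezis--Lieb splitting then gives $c\geq c_\infty$. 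The contradiction comes from the strict inequality $c<c_\infty$, which I would establish by projecting (approximate) minimizers $w$ of $c_\infty$, suitably $\Gamma$-translated so that their $L^2$ mass is concentrated on a set where $V>V_\infty$ (possible provided $V\not\equiv V_\infty$; the opposite case is trivial by translation invariance), onto $\mathcal N$ via the unique rescaling $t>0$: then
\[
\Phi(tw)=\Phi_\infty(tw)-\frac{t^2}{2}\int_M(V-V_\infty)w^2\, dV_g\leq c_\infty-\frac{t^2}{2}\int_M(V-V_\infty)w^2\, dV_g<c_\infty.
\]
The hard part will be precisely this last step: rigorously making the test-function comparison robust under mere approximate minimizers of $c_\infty$, since attainment of $c_\infty$ itself is an existence problem on the translation-invariant asymptotic problem whose resolution relies on the same weakly homogeneity machinery. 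Once the strict gap is secured, the rest is a standard Ekeland-plus-concentration-compactness argument adapted to the Nehari setting on a manifold of bounded geometry.
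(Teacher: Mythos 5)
Your proposal is correct and follows essentially the same route as the paper: equivalent norm from $\sup V<\lambda_1(M)$, Nehari manifold with Ambrosetti--Rabinowitz boundedness, the vanishing dichotomy via the uniformly locally finite cover and a Lions-type lemma, translation by the weakly homogeneous isometries, and the comparison $c=c_\infty$ contradicted by a strict energy gap. The ``hard part'' you flag is resolved in the paper exactly as you anticipate: attainment of $c_\infty$ follows from the translation-invariant constant-potential case, and the strict inequality $\Phi(w)<c_\infty$ is obtained by noting that the asymptotic minimizer $w$ is of strict sign (strong maximum principle), so $\int_M(V-V_\infty)w^2\,dV_g>0$ whenever $V\not\equiv V_\infty$.
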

Notice that the geometric assumptions of Proposition \ref{p:existence} ensure the validity of the Sobolev embedding \eq{cmctemb} \cite[Theorem 2.21]{au}. We observe that condition \eq{eq:AR-h}, is the Ambrosetti-Rabinowitz condition. We refer to \cite{Sz} and references therein for possible alternative assumptions on $h$ in the euclidean setting. But in the present paper we are mainly interested in the study of
\eqref{equa} over the Riemannian model \eqref{metric}. One may ask what happens in this case if the weakly homogeneity
condition is assumed. An answer to this question is given by the
following

\begin{prop} \label{p:constant-K}
Let $(M,g)$ be a $n$-dimensional Riemannian model defined by \eqref{metric} with $n\ge 2$ and $\psi$ satisfying assumption $(H)$ and suppose that $(M,g)$
is weakly homogeneous. Then the scalar curvature is constant. Moreover if we denote by $\kappa$ this constant then only two alternatives may occur:
\begin{itemize}
\item[(i)] $\kappa<0$ and $(M,g)$ is a hyperbolic space, namely
$$
\psi(r)=\alpha^{-1} \sinh(\alpha r) \quad \text{with} \ \alpha=\sqrt{\frac{|\kappa|}{n(n-1)}};
$$

\item[(ii)] $\kappa=0$ and $(M,g)$ is the Euclidean space, namely $\psi(r)=r$.
\end{itemize}
\end{prop}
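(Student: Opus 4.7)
The plan is to use weak homogeneity to produce a ``second pole'' $p=\gamma(o)\neq o$ and then exploit the rigidity of having two distinct warped--product descriptions of the same metric. First, since $\psi>0$ on $(0,\infty)$ the radial geodesics from $o$ extend to infinity, so $M$ has infinite diameter. Choose $x\in M$ with $d(x,o)>2D$, and apply weak homogeneity to get $\gamma\in\Gamma$ with $d(x,\gamma(o))\le D$; the triangle inequality forces $p:=\gamma(o)\neq o$, and I set $r_{0}:=d(o,p)>0$.

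Because $\gamma$ is a global isometry mapping $o$ to $p$, pushing forward the geodesic polar coordinates at $o$ gives geodesic polar coordinates at $p$ with the \emph{same} warping function $\psi$, so $g=d\rho^{2}+\psi(\rho)^{2}d\Xi^{2}$ with $\rho=d(\cdot,p)$. Consequently any intrinsic scalar that depends only on $d(\cdot,o)$ equally depends only on $d(\cdot,p)$. Applied to the scalar curvature, this gives $K_{\psi}(d(x,o))=K_{\psi}(d(x,p))$ for every $x\in M$, where $K_{\psi}(r)$ is the right--hand side of \eqref{eq:scalar}. Fix $r>0$: since the geodesic sphere $\partial B_{r}(o)$ is connected and $d(\cdot,p)$ attains both endpoints $|r-r_{0}|$ (on the $op$--geodesic) and $r+r_{0}$ (on its antipodal extension through $o$), $d(x,p)$ sweeps the whole interval $[|r-r_{0}|,\,r+r_{0}]$ as $x$ runs over the sphere. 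Hence $K_{\psi}$ is constant on $[|r-r_{0}|,\,r+r_{0}]$ for every $r>0$; these intervals all contain $r_{0}$ and have arbitrarily large right endpoints, so $K_{\psi}\equiv\kappa$ on $[0,\infty)$. This proves the first assertion.

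For the classification, the same invariance/interval--sweep argument must be run at a finer level. For $n\ge 3$ the Ricci tensor has eigenvalue $\mu_{1}(r)=-(n-1)\psi''/\psi$ of multiplicity $1$ (radial direction) and $\mu_{2}(r)=-(n-2)((\psi')^{2}-1)/\psi^{2}-\psi''/\psi$ of multiplicity $n-1$ (tangential directions). Matching Ricci spectra as multisets via the two descriptions, the differing multiplicities identify $\mu_{i}(d(x,o))=\mu_{i}(d(x,p))$ wherever $\mu_{1}\neq\mu_{2}$, and I extend across the closed exceptional locus by continuity. The same chaining then forces $\mu_{1}$ and $\mu_{2}$ to be individually constant, equivalently $k_{1}(r):=-\psi''/\psi$ and $k_{2}(r):=(1-(\psi')^{2})/\psi^{2}$ constant. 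For $n=2$ constancy of $K=-2\psi''/\psi$ already yields $k_{1}$ constant. In every case $\psi$ solves $\psi''=-c\psi$ with $\psi(0)=0$, $\psi'(0)=1$: the case $c=0$ gives $\psi(r)=r$ and $\kappa=0$; the case $c=-\alpha^{2}<0$ gives $\psi(r)=\alpha^{-1}\sinh(\alpha r)$ and $\kappa=n(n-1)c=-n(n-1)\alpha^{2}$, so $\alpha=\sqrt{|\kappa|/(n(n-1))}$ as stated; the case $c>0$ yields $\psi(r)=c^{-1/2}\sin(c^{1/2}r)$, which violates $(H)$ on $(0,\infty)$ and is excluded.

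The subtle point, which I view as the main obstacle, is precisely the refinement just described: constancy of the scalar curvature alone is a single nonlinear ODE on $\psi$ and does not single out Euclidean or hyperbolic warping functions when $n\ge 3$. One must upgrade from $K$ to the Ricci eigenvalues, which in turn demands care at points where the two eigenvalues coincide; once that is handled by continuity across the closed set $\{\mu_{1}=\mu_{2}\}$, the interval--chaining argument concludes the proof.
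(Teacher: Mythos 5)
Your proof of the constancy of the scalar curvature is essentially the paper's: produce a second pole $p=\gamma(o)\neq o$ with $d(o,p)$ as large as you wish, observe that $K$ is constant on geodesic spheres centred at both $o$ and $p$, and chain overlapping intervals of radii. Your classification step, however, takes a genuinely different route. The paper stays with the scalar curvature alone: constancy of $K$ gives the Cauchy problem $2\psi''+(n-2)\frac{(\psi')^2-1}{\psi}+\frac{\kappa}{n-1}\,\psi=0$, $\psi(0)=0$, $\psi'(0)=1$, $\psi''(0)=0$, and the heart of the paper's proof is a delicate Gronwall-type uniqueness lemma for this problem, which is \emph{singular} at $r=0$; uniqueness together with the explicit solutions $r$, $\alpha^{-1}\sinh(\alpha r)$, $\alpha^{-1}\sin(\alpha r)$ then yields the dichotomy. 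You instead upgrade to the Ricci spectrum: for $n\ge 3$ the multiplicities $1$ and $n-1$ force the radial and tangential eigenvalues to match individually under the two polar descriptions (and at an Einstein point the identification is automatic, since both descriptions then exhibit a single eigenvalue of multiplicity $n$ — no continuity argument is even needed), so $\psi''/\psi$ is constant. That is a linear, nonsingular ODE with $\psi(0)=0$, $\psi'(0)=1$, and the classification is immediate. This is correct and buys a real simplification, since it bypasses the paper's hardest lemma; the price is that it needs $n\ge 3$ for the multiplicity argument, with $n=2$ handled separately through $K=-2\psi''/\psi$, exactly as you do.

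Two caveats. First, your closing assertion that constancy of $K$ ``does not single out'' the warping function when $n\ge 3$ is unfounded: the paper proves precisely that the singular Cauchy problem above has a unique solution under $(H)$, so the scalar curvature alone does suffice — it is merely harder. Second, your claim that $d(\cdot,p)$ attains the value $r+r_0$ on the sphere of radius $r$ about $o$ presupposes that the broken radial geodesic through the pole is minimizing; under $(H)$ alone this is not automatic (a thin ``waist'' where $\psi$ is small makes it cheaper to travel around rather than through $o$). The paper's chaining rests on the closely related, slightly weaker, claim that the point of $S(o,\rho_z)$ antipodal to $x$ lies outside $B(x,R)$, so this is a subtlety shared with the published argument rather than a defect of your route, but it is the one geometric step in your write-up that calls for an explicit justification (for instance, your interval-chaining still closes if one only uses the attained left endpoint $|r-r_0|$ together with the fact that $d(o,p)$ can be taken arbitrarily large, provided one verifies that the range of $d(\cdot,p)$ on each sphere reaches past $r$).
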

For results concerning manifolds with constant curvature see
\cite{BeBer}.
It is clear from Proposition \ref{p:constant-K} that weakly
homogeneity becomes a too restrictive condition if $(M,g)$ is the Riemannian model \eq{metric}. Therefore, we look for some
alternative conditions on $(M,g)$ and on the nonlinearity in \eqref{equa} which guarantee compactness of the embeddings of
$H^1(M)$ into suitable weighted Lebesgue spaces. More precisely, we assume that $f\in C^1([0,\infty)\times \R)$ is such that
\begin{itemize}
\item[$(f3)$] $f(r,0)=0$ for any $r>0$ and, denoting by $f'_s(r,s)$ the derivative with respect to $s$, we have
$$
-C-W(r)|s|^{p-1}\le f'_s(r,s) \le \lambda+W(r)|s|^{p-1} \qquad \text{for any } r>0 \ \text{and} \ s\in \R \, ,
$$
where $1<p<2^*-1$, $C$ is a positive constant, $\lambda\in (-C,\lambda_1(M))$ with $\lambda_1(M)$ as in \eqref{lambda1}, and $W\in L^\infty_{\rm loc}\big([0,\infty)\big)$ is a nonnegative function;

\item[$(f4)$] there exists $\mu>2$ such that
$$
0<\mu H(r,s)\le h(r,s)s \qquad \text{for any } r>0 \ \text{and} \ s\neq 0
$$
where $\lambda$ is as in $(f3)$, $h(r,s):=f(r,s)-\lambda s$ and $H(r,s):=\int_0^s h(r,t)dt$.
\end{itemize}
We are ready to state the main result of this subsection where we restrict ourselves to $n\geq 3$.


\begin{thm}
\label{compact 1}
Let $(M,g)$ be the $n$-dimensional Riemannian model defined by \eqref{metric} with $n\ge 3$ and $\psi$ satisfying assumption $(H)$. Let $f(\cdot,\cdot)$ satisfy $(f2)-(f4)$. Letting $\phi(r):=\frac{\psi(r)}{r}$ and $W$ be as in $(f3)$, we assume that
\begin{itemize}
\item[(i)] $\phi(r)\geq 1$ for every $r> 0$;
\item[(ii)] the functions $\phi^{n-1}( r) W( r)$ and $\phi^{n-3}(r)$ are nondecreasing in $[R,+\infty)$ for some $R> 0$;
\item [(iii)] $W(r)=o(\phi^{-\frac{p-1}{2}})$ as $r\rightarrow +\infty$.
\end{itemize}
Then \eqref{equa} admits a least energy solution in the sense of \eqref{eq:c}.
\end{thm}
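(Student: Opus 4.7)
My plan is to implement the classical minimization on the Nehari manifold, exploiting assumptions (i)--(iii) to recover the compactness that is generically lost on non-compact manifolds. Assumption $(f3)$ yields the pointwise growth $|f(r,s)|\le C|s|+W(r)|s|^{p}$, so that $\Phi$ in \eqref{eq:Phi} and the Nehari manifold $\mathcal{N}$ in \eqref{eq:Nehari} are well defined on $H^1(M)$ once one has the continuous embedding $H^1(M)\hookrightarrow L^{p+1}(M;W)$, which will be a byproduct of the compactness lemma below. Using $(f2)$, a standard fibering argument attaches to every $v\in H^1(M)\setminus\{0\}$ a unique $t_v>0$ with $t_v v\in\mathcal{N}$, so $\mathcal{N}\neq\emptyset$. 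The Ambrosetti--Rabinowitz condition $(f4)$, combined with $\lambda<\lambda_1(M)$ (which makes $\int_M(|\nabla_g v|_g^2-\lambda v^2)\,dV_g$ equivalent to $\|v\|_{H^1(M)}^2$), yields $c=\inf_{\mathcal{N}}\Phi>0$ and the a priori bound $\|u_n\|_{H^1(M)}\le C$ for any minimizing sequence $\{u_n\}\subset\mathcal{N}$; up to a subsequence, $u_n\rightharpoonup u$ weakly in $H^1(M)$.

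\emph{Key compactness lemma.} The heart of the proof is to show that, under (i)--(iii), the embedding $H^1(M)\hookrightarrow L^{p+1}(M;W)$ is compact. The idea is that condition (i), $\phi\ge 1$, gives $\psi(r)\ge r$; conditions (ii), the monotonicity of $\phi^{n-1}W$ and $\phi^{n-3}$ on $[R,+\infty)$, are tailored to derive a Strauss-type decay estimate for suitable spherical averages $\bar{u}(r)$ of functions in $H^1(M)$, of the form $|\bar{u}(r)|\le C\,\psi(r)^{-(n-1)/2}\|u\|_{H^1(M)}$; condition (iii), $W=o(\phi^{-(p-1)/2})$, then absorbs the factor $|u|^{p-1}$ and produces the uniform tail-vanishing estimate
$$
\lim_{R\to\infty}\,\sup_{n}\,\int_{M\setminus B_R}W(r)\,|u_n|^{p+1}\,dV_g=0.
$$
Combined with local Rellich--Kondrachov compactness on each $B_R$, this yields $u_n\to u$ strongly in $L^{p+1}(M;W)$ along a subsequence.

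\emph{Passage to the limit.} Strong convergence in $L^{p+1}(M;W)$, paired with $(f3)$, permits passage to the limit in the Nehari identity $\Phi'(u_n)[u_n]=0$, so that $\Phi'(u)[u]=0$. To rule out $u\equiv 0$ I argue by contradiction: if it were so, setting $h(r,s):=f(r,s)-\lambda s$, one would have $\int_M h(r,u_n)u_n\,dV_g\to 0$; the Nehari identity rewrites as $\int_M(|\nabla_g u_n|_g^2-\lambda u_n^2)\,dV_g=\int_M h(r,u_n)u_n\,dV_g$, while $(f4)$ combined with $\Phi(u_n)\to c$ forces the left-hand side to be bounded below by $\frac{2\mu}{\mu-2}\Phi(u_n)-o(1)\to\frac{2\mu}{\mu-2}c>0$, a contradiction. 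Hence $u\in\mathcal{N}$, and weak lower semicontinuity of the Dirichlet integral together with the strong convergence of the nonlinear term gives $\Phi(u)\le\liminf_n\Phi(u_n)=c$, so $\Phi(u)=c$ and $u$ is a least energy solution.

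\emph{Main obstacle.} The genuine difficulty lies in the compactness lemma. Proposition \ref{p:constant-K} shows that weak homogeneity of $(M,g)$ already forces $(M,g)$ to be $\R^n$ or $\mathbb{H}^n$, so the translation-based concentration-compactness of \cite{marzuola} is unavailable in our generality. Conditions (i)--(ii) encode exactly the control on the volume growth $\psi^{n-1}$ and on the weighted measure needed to push a decay estimate through without assuming radial symmetry, while (iii) provides the sharp margin by which $W$ beats the nonlinear exponent and prevents minimizing sequences from losing mass at infinity; any weakening of these conditions is expected to require a full concentration-compactness analysis.
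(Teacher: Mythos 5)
Your overall skeleton (Nehari minimization, a compact embedding $H^1(M)\hookrightarrow L^{p+1}(M;W)$ as the engine, nontriviality of the weak limit via $c>0$) is the same as the paper's, but the one step you yourself flag as the heart of the matter — the compactness lemma — is where your proposed mechanism does not go through. A Strauss-type decay $|\bar u(r)|\le C\psi(r)^{-(n-1)/2}\|u\|_{H^1}$ holds for the \emph{spherical average} $\bar u$, but the tail quantity you need to kill is $\int_{M\setminus B_R}W|u|^{p+1}\,dV_g=\int_R^{\infty}\int_{{\mathbb S}^{n-1}}W|u(r,\Theta)|^{p+1}\psi^{n-1}\,d\Theta\,dr$, and the $L^{p+1}({\mathbb S}^{n-1})$-norm of $u(r,\cdot)$ is in no way controlled by $|\bar u(r)|$: all the mass can sit in the angular oscillation $u-\bar u$, for which no pointwise decay is available. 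This is precisely the classical reason radial compactness arguments fail for general $H^1$ functions. The paper takes a genuinely different route: Lemma \ref{norme} transfers the $H^1(M)$ and weighted $L^q(M)$ norms to weighted Sobolev/Lebesgue spaces on $\R^n$ with weights $\phi^{n-1}$, $\phi^{n-3}$, and then Proposition \ref{embedding} (an adaptation of the Opic--Kufner weighted embedding theorem) gives compactness via localized Sobolev inequalities on balls $B(x,\delta(x))$ with $\delta(x)=\phi^{-1}(|x|)$ and embedding constants vanishing at infinity; hypotheses (i)--(iii) are exactly what is needed to verify the weight-comparison condition \eqref{wv} (this is where the monotonicity in (ii) enters) and the vanishing condition \eqref{limitezero} (this is where (iii), $W=o(\phi^{-(p-1)/2})$, enters). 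Your sketch gives no substitute for this, so the central claim of the theorem is not actually proved.

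A second, more routine gap: you pass to the limit in the Nehari identity for a plain minimizing sequence. Weak lower semicontinuity of the Dirichlet integral only yields $\int_M(|\nabla_g u|_g^2-\lambda u^2)\,dV_g\le\int_M h(r,u)u\,dV_g$, i.e.\ $I(u)\le 0$, not $u\in\mathcal N$; your subsequent inequality $\Phi(u)\le\liminf_n\Phi(u_n)=c$ presupposes $u\in\mathcal N$. This is fixable (project $u$ back onto $\mathcal N$ via the fibering map $t\mapsto\Phi(tu)$ with $t_u\le1$ and use $(f4)$ to keep the energy below $c$), but you do not do it. The paper sidesteps the issue entirely by upgrading the minimizing sequence to a Palais--Smale sequence through Ekeland's principle and the Szulkin--Weth homeomorphism $m:S\to\mathcal N$ (Lemmas \ref{l:nehari-1}--\ref{l:relation-PS}), and then proving the full Palais--Smale condition (Lemma \ref{l:PS-validity}) using the compact embedding and the equivalent norm $\bigl(\int_M(|\nabla_g\cdot|_g^2-\lambda(\cdot)^2)\,dV_g\bigr)^{1/2}$, so that the minimizer is obtained as a strong $H^1$ limit.
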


\begin{rem} \label{r:example}
It is worth noticing that Theorem \ref{compact 1} does not work for $\psi(r)=r$ ((ii) and (iii) would yield a contradiction) while it works if $\psi(r)= \sinh r$, $\psi(r)=e^{ar}$, $\psi(r)=e^{r^a}$, $\psi(r)=r^b e^{ar}$,
$\psi(r)=r^b e^{r^a}$ for $r\geq R$, with $a\geq 1$ and $b>0$.
Namely, unbounded negative sectional curvatures are allowed. For any of the above models, as an explicit
example of $f$, take $f(r,u)=\lambda u+W(r)|u|^{p-1}u$ with $W(r)=\phi^{-\alpha}$, where
$\frac{p-1}{2}\leq \alpha \leq n-1$ and $\lambda\in (-C,\lambda_1(M))$.
\end{rem}

\begin{rem} Let $u$ be the least energy solution found in Theorem \ref{compact 1}. We observe that if in Theorem \ref{compact 1} we add the assumption $f'_s(r,s)\geq 0$ for any $r>0$ and $s\in \R$, then $u$ satisfies all the sign and symmetry properties stated in Theorem \ref{main}.
\end{rem}
Next we want to focus our attention on the condition $\lambda<\lambda_1(M)$ required in $(f3)$. If this condition is dropped, then existence of nontrivial constant sign solutions may fail as shown in the following

\begin{prop} \label{p:non-ex} Let $(M,g)$ be the $n$-dimensional Riemannian model defined by \eqref{metric} with $n\ge 3$ and $\psi$ satisfying assumption $(H)$. Let $f$ be in the form $f(r,s)=\lambda s+h(r,s)$ with $\lambda>\lambda_1(M)$ and suppose it satisfies $(f3)$ and $(f4)$. Furthermore, suppose that (i)-(iii) in Theorem \ref{compact 1} hold true with $W$ as in $(f3)$. Then \eqref{equa} does not admit any nontrivial constant sign solution $u\in H^1(M)$.
\end{prop}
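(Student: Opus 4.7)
The plan is to argue by contradiction, combining the Picone identity with the Rayleigh quotient characterization \eqref{lambda1} of $\lambda_1(M)$. Let $u\in H^1(M)\setminus\{0\}$ be a constant-sign solution of \eqref{equa}; up to relabelling, I may assume $u\ge 0$ (the case $u\le 0$ is treated identically, since by (f4) the map $s\mapsto h(r,s)/s$ is strictly positive on both sides of zero). By the subcritical growth in (f3) and the Sobolev embedding \eqref{cmctemb}, a standard elliptic bootstrap yields $u\in C^{2,\alpha}_{\rm loc}(M)$. Moreover, integrating the lower bound $f'_s\ge -C-W|s|^{p-1}$ from (f3) from $0$ to $u(x)$ gives $f(r,u)\ge -Cu-W u^p/p$, so
\[
-\Delta_g u + \bigl(C+W(r) u^{p-1}/p\bigr)u\ge 0,
\]
with a locally bounded coefficient. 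The strong maximum principle then forces $u>0$ everywhere on $M$.

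Next, fix any $\varphi\in C_c^\infty(M)\setminus\{0\}$. Strict positivity and continuity of $u$ imply $\varphi^2/u\in C_c^1(M)\subset H^1(M)$, so it is an admissible test function in the weak formulation \eqref{eq:weak-solution}. Picone's identity on $(M,g)$ reads pointwise
\[
|\nabla_g\varphi|_g^2-\bigl\langle\nabla_g(\varphi^2/u),\nabla_g u\bigr\rangle_g=\bigl|\nabla_g\varphi-(\varphi/u)\,\nabla_g u\bigr|_g^2\ge 0.
\]
Integrating over $M$ and using \eqref{eq:weak-solution} with $v=\varphi^2/u$, together with the splitting $f(r,u)/u=\lambda+h(r,u)/u$, I deduce
\[
\int_M|\nabla_g\varphi|_g^2\,dV_g\;\ge\;\lambda\int_M\varphi^2\,dV_g+\int_M\frac{h(r,u)}{u}\,\varphi^2\,dV_g .
\]

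By (f4), $h(r,u)\,u>0$ on $M$, so $h(r,u)/u>0$ pointwise and the last integral is strictly positive whenever $\varphi\not\equiv 0$. Hence $\int_M|\nabla_g\varphi|_g^2\,dV_g>\lambda\int_M\varphi^2\,dV_g$ for every $\varphi\in C_c^\infty(M)\setminus\{0\}$, and taking the infimum over $\varphi$ in \eqref{lambda1} yields $\lambda_1(M)\ge\lambda$, contradicting the standing hypothesis $\lambda>\lambda_1(M)$. The main technical points to double-check are the strict positivity of $u$ and the admissibility of $\varphi^2/u$ in \eqref{eq:weak-solution}; both reduce to the two-sided bound in (f3) and the local boundedness of $W$. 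Once these are in place, the rest is a routine Picone argument, and in particular assumptions (i)-(iii) of Theorem \ref{compact 1} enter only through the Sobolev embedding used in the bootstrap.
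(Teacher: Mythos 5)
Your proof is correct, but it takes a genuinely different route from the paper's. The paper follows the Mancini--Sandeep argument: it first shows $\lambda_1(B_R)\to\lambda_1(M)$ as $R\to+\infty$, then tests the equation against the first Dirichlet eigenfunction $\varphi_{1,R}$ of $-\Delta_g$ on $B_R$; integration by parts produces the boundary term $\psi^{n-1}(R)\int_{{\mathbb S}^{n-1}}u(R,\Theta)\,\partial_r\varphi_{1,R}(R,\Theta)\,d\Theta\le 0$ (as $\varphi_{1,R}>0$ vanishes on $\partial B_R$ and $u>0$), whence $0\le\int_{B_R}h(r,u)\varphi_{1,R}\,dV_g\le[\lambda_1(B_R)-\lambda]\int_{B_R}u\varphi_{1,R}\,dV_g$, so $\lambda_1(B_R)\ge\lambda$ for every $R$, and the limit gives the contradiction. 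You replace the eigenfunction exhaustion by Picone's identity with arbitrary $\varphi\in C_c^\infty(M)$, which yields the Rayleigh-quotient bound $\int_M|\nabla_g\varphi|_g^2\,dV_g\ge\lambda\int_M\varphi^2\,dV_g$ directly on $M$. Both proofs rest on the same two pillars --- strict positivity of $u$ via the strong maximum principle, and $h(r,u)u>0$ from $(f4)$ --- but your version is more self-contained: it avoids the existence and boundary regularity of $\varphi_{1,R}$ and the monotone-limit argument $\lambda_1(B_R)\downarrow\lambda_1(M)$, at the price of needing $u\in C^1$ locally so that $\varphi^2/u$ is admissible. One small inaccuracy: you should not invoke the global embedding \eqref{cmctemb}, which requires \eqref{Riccibounded}--\eqref{sec bound} and is not available under the hypotheses of the proposition; but this is harmless, since the Brezis--Kato/Schauder bootstrap is purely local and only uses local Sobolev embeddings together with $W\in L^\infty_{\rm loc}$, so your regularity and positivity claims stand. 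Note also that the strict inequality in your last display is not needed: the weak inequality already gives $\lambda_1(M)\ge\lambda$, which suffices.
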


In Proposition \ref{p:non-ex} we assumed that $\lambda>\lambda_1(M)$ so that one may ask what happens when $\lambda=\lambda_1(M)$. A partial answer can be found in \cite[Theorem 1.1]{mancini} where nonexistence of positive $H^1(\mathbb H^n)$-solutions is proved when $(M,g)$ coincides with the hyperbolic space $\mathbb H^n$ and $f(r,s)=\lambda_1(\mathbb H^n)s+|s|^{p-1}s$.
We conclude this section with the following open problem:

\begin{open} \label{o:existence}
{\rm In order to guarantee existence of least energy solutions we made two different kinds of assumptions: either the weakly homogeneity and the bounded geometry of $M$ in Proposition \ref{p:existence} or the compactness of the embedding $H^1(M)\subset L^{p+1}(M;W)$ in Theorem \ref{compact 1}. Both the results do not cover the case in which $M$ is a Riemannian model \eq{metric} with nonconstant curvature and $f(r,u)=|u|^{p-1}u$, $1<p<2^*-1$ in \eqref{equa}. As far as we are aware, the existence of a least energy solution for the equation
\begin{equation} \label{eq:Lane-Emden}
-\Delta_g u=|u|^{p-1}u \qquad \text{in }M\,, \quad 1<p<2^*-1
\end{equation}
is still an open problem when $M$ is a general manifold different from $\mathbb R^n$ or $\mathbb H^n$. Note that $u$ is a least energy solution of \eqref{eq:Lane-Emden} if and only if $u$ is a minimizer of
\begin{equation*}
S_p:=\inf_{v\in H^1(M)\setminus\{0\}} \frac{\int_M |\nabla_g v|^2_g \, dV_g}{\left(\int_M |v|^{p+1} \, dV_g\right)^{2/(p+1)}} \, .
\end{equation*}
We know that \eqref{eq:Lane-Emden} admits a positive radial solution which is a minimizer for
\begin{equation*}
S_{p,r}:=\inf_{v\in H^1_r(M)\setminus\{0\}} \frac{\int_M |\nabla_g v|^2_g \, dV_g}{\left(\int_M |v|^{p+1} \, dV_g\right)^{2/(p+1)}}
\end{equation*}
where $H^1_r(M)$ denotes the space of radial functions in $H^1(M)$, see \cite[Theorem 2.5]{bfg}. We ask whether the two constants $S_p$ and $S_{p,r}$ coincide. This question is strictly related with Open Problem \ref{o:symmetry}. Indeed, once proved the existence of a minimizer $u$ for $S_p$, then one way to establish the validity of the identity $S_p=S_{p,r}$ is to check whether or not $u$ is a radially symmetric function.
\endproof }
\end{open}


\section{Proof of Theorem \ref{main} and Proposition \ref{separa}}  \label{proofs}

We first prove some preliminary lemmas which we shall apply in the proof of the symmetry result.
\begin{lem}\label{positivity}
Suppose that $\psi$ satisfies $(H)$ and $f$ satisfies $(f1)-(f2)$. Then, any least energy solution to \eqref{equa} belongs to $C^2(M)$ and it is either strictly positive or strictly negative in $M$.
\end{lem}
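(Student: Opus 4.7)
First, the regularity $u \in C^2(M)$ follows from a standard elliptic bootstrap. Integrating $(f1)$ gives $|f(r,s)| \le C(|s| + W(r)|s|^p)$; since $u \in H^1(M)$ solves $-\Delta_g u = f(r,u)$ weakly, Moser iteration (or the Brezis-Kato trick in the critical case $p=2^*-1$) yields $u \in L^q_{\mathrm{loc}}(M)$ for every $q<\infty$. Then $f(r,u) \in L^q_{\mathrm{loc}}(M)$, so $W^{2,q}_{\mathrm{loc}}$ elliptic regularity gives $u \in C^{1,\alpha}_{\mathrm{loc}}(M)$, and since $f \in C^1$, Schauder estimates applied to the $C^{0,\alpha}_{\mathrm{loc}}$ right-hand side yield $u \in C^{2,\alpha}_{\mathrm{loc}}(M) \subset C^2(M)$.

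For the sign property, I would decompose $u = u^+ - u^-$ in $H^1(M)$ and introduce the ``reflected'' nonlinearity $\tilde f(r,s) := -f(r,-s)$, with primitive $\tilde F(r,s) := \int_0^s \tilde f(r,t)\,dt = F(r,-s)$, and the corresponding action functional $\tilde\Phi$ and Nehari manifold $\tilde{\mathcal N}$ defined in obvious analogy with $\Phi$ and $\mathcal N$. A direct computation (via the substitution $\sigma=-s$) shows that $\tilde f$ inherits $(f1)$-$(f2)$ from $f$. Testing the equation weakly with $u^+$ and with $-u^-$ respectively yields
\[
\int_M |\nabla_g u^+|_g^2\,dV_g = \int_M f(r,u^+)\,u^+\,dV_g,\qquad \int_M |\nabla_g u^-|_g^2\,dV_g = \int_M \tilde f(r,u^-)\,u^-\,dV_g,
\]
so that $u^+ \in \mathcal N$ whenever $u^+ \not\equiv 0$ and $u^- \in \tilde{\mathcal N}$ whenever $u^- \not\equiv 0$. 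Splitting $\int_M F(r,u)\,dV_g$ over $\{u>0\}$ and $\{u<0\}$ and using $F(r,u)=\tilde F(r,u^-)$ on the latter set, one obtains the key decomposition
\[
\Phi(u) \;=\; \Phi(u^+) + \tilde\Phi(u^-).
\]

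Next, differentiating $s \mapsto \tfrac{1}{2}f(r,s)s - F(r,s)$ gives $\tfrac{1}{2}[f'_s(r,s)s - f(r,s)]$, which by $(f2)$ is positive for $s>0$ and negative for $s<0$; since the map vanishes at $s=0$ it is strictly positive for every $s\ne 0$. Hence $\Phi(v)=\int_M[\tfrac{1}{2}f(r,v)v - F(r,v)]\,dV_g > 0$ for every $v \in \mathcal N \setminus \{0\}$, and the identical computation for $\tilde f$ yields $\tilde\Phi(w) > 0$ for every $w \in \tilde{\mathcal N} \setminus \{0\}$. Consequently, if both $u^+$ and $u^-$ were nontrivial we would have $\Phi(u^+) \ge c$ and $\tilde\Phi(u^-) > 0$, so $c = \Phi(u) = \Phi(u^+) + \tilde\Phi(u^-) > c$, a contradiction. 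Therefore either $u \ge 0$ or $u \le 0$ on $M$. In the first case (the second being symmetric), $(f1)$ gives $f(r,u) \ge 0$, so $-\Delta_g u \ge 0$, i.e.\ $u$ is a continuous, nonnegative, nontrivial superharmonic function on the connected manifold $M$; the strong minimum principle then forces $u > 0$ everywhere on $M$.

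The main obstacle is the structural step: identifying the reflected nonlinearity $\tilde f$ and the resulting decomposition $\Phi(u) = \Phi(u^+) + \tilde\Phi(u^-)$, which lets one exploit the positivity induced by $(f2)$ simultaneously on the positive and negative parts of $u$, bypassing any oddness assumption on $f$. Once this is in place, the regularity bootstrap and the application of the strong minimum principle are routine.
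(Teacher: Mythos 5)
Your proof is correct and takes essentially the same route as the paper: Brezis--Kato plus elliptic/Schauder estimates for the $C^2$ regularity, then the splitting of $u$ into positive and negative parts lying on the Nehari manifold together with the positivity of $\Phi$ on $\mathcal N$ deduced from $(f2)$, and finally the strong maximum principle. The only cosmetic difference is that you introduce the reflected nonlinearity $\tilde f$ and the functional $\tilde\Phi$, whereas the paper works directly with $-u^-\in\mathcal N$ and writes $\Phi(u)=\Phi(u^+)+\Phi(-u^-)$; since $\tilde\Phi(u^-)=\Phi(-u^-)$, the two formulations coincide.
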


\begin{proof}
Let $u$ be a least energy solution of \eqref{equa}. Standard Brezis-Kato type estimates \cite{Brezis-Kato} combined with elliptic regularity estimates yield $u\in C^{1,\alpha}(M)$ for any $\alpha \in (0,1)$. Then, assumption $(f1)$ combined with classical Schauder estimates yields $u\in C^2(M)$. In order to prove that $u$ does not change sign we first show that $\ds{\Phi(u)=c=\inf_{v\in \mathcal N} \Phi(v)>0}$. Indeed by $(f2)$ and a simple integration by parts we infer $2F(r,s)-f(r,s)s<0$ for any $r>0$ and $s\neq 0$, and hence since $u\in \mathcal N$ we obtain
\begin{equation*}
c=\Phi(u)=-\frac 12\int_M [2F(r,u)-f(r,u)u]\, dV_g>0 \, .
\end{equation*}
To show that $u$ is positive or negative, let $u^+:=\max\{u,0\}$ and $u^-:=-\min\{u,0\}$ denote the positive and negative part of $u$. Suppose that $u^+,u^-$ are not identically equal to zero in $M$. Testing \eqref{eq:weak-solution} with
$u^+$ and $u^-$, one gets $u^+,-u^-\in \mathcal{N}$. Then,
$$
c=\Phi(u)=\Phi(u^+)+\Phi(-u^-)\geq 2c,
$$
a contradiction with $c=\Phi(u)>0$. Hence, we have $u\geq 0$ or $u\leq 0$. An application of the strong maximum principle yields the strict inequalities, see \cite{Gilb}.
\end{proof}
For every $\Theta_0 \in {\mathbb S}^{n-1}$, let
$$H_{\Theta_{0}}:=\{(r, \Theta)\,: r\geq 0,\quad \Theta \cdot \Theta_0\geq 0 \}\,,$$
and $\cH :=\{H_{\Theta_{0}}\,:\Theta_0 \in {\mathbb S}^{n-1}\}$.  For $\Theta_0 \in {\mathbb S}^{n-1}$,
we define $p_{H_{\Theta_{0}}}: M \to M$ by $p_{H_{\Theta_{0}}}(r,\Theta)=(r,\Theta-2(\Theta\cdot \Theta_0)\Theta_0)$. For simplicity, we also put $H=H_{\Theta_{0}}$ and $\overline{x}=p_{H}(x)$ for $x \in M$ when the underlying $H_{\Theta_{0}}$ is understood. For a measurable
function $v: M \to \R$, we define the {\em polarization} $v_H$ of $v$
relative to $H$ by
\begin{equation}
  \label{eq:def-polarization}
v_H(x) = \left \{
  \begin{aligned}
    &\max\{v(x),v(\overline{x})\},&&\qquad x \in H,\\
    &\min\{v(x),v(\overline{x})\},&& \qquad x \in M\setminus H.
  \end{aligned}
\right.
\end{equation}

\begin{lem}\label{strict-alt}
Suppose that $\psi$ satisfies $(H)$ and $f$ satisfies $(f1)-(f2)$. Let $u$ be a (positive) least energy solution to \eqref{equa} and $H \in \cH$. Then, one of the following is true:
\begin{itemize}
\item[(i)] $u > u \circ p_H$ in $\innt(H)$,\\
\item[(ii)] $u < u \circ p_H$ in $\innt(H)$,\\
\item[(iii)] $u \equiv u \circ p_H$ in $M$.
\end{itemize}
\end{lem}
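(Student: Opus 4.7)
The plan is to prove that the polarization $u_H$ of $u$ with respect to $H$ is itself a least energy solution of \eqref{equa}, and then to apply the strong maximum principle to the difference $w:=u_H-u$.

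Note first that the reflection $p_H$ is a global isometry of $(M,g)$: in the coordinates \eqref{metric} it fixes $r$ and acts on the spherical variable $\Theta$ by the Euclidean reflection $\Theta\mapsto \Theta-2(\Theta\cdot \Theta_0)\Theta_0$, which preserves both $dr^2$ and $\psi(r)^2 d\Theta^2$. Because polarization only swaps the values of $u$ at pairs of points $(x,\bar x)$ sharing the same geodesic distance from $o$, the two-point rearrangement preserves integrals of radial densities:
\[
\int_M F(r,u_H)\,dV_g=\int_M F(r,u)\,dV_g,\qquad \int_M f(r,u_H)\,u_H\,dV_g=\int_M f(r,u)\,u\,dV_g,
\]
and the gradient $L^2$ norm is preserved as well,
\[
\int_M |\nabla_g u_H|_g^2\,dV_g=\int_M |\nabla_g u|_g^2\,dV_g,
\]
since splitting $H=A_1\cup A_2$ with $A_1=\{x\in H:u(x)\ge u(\bar x)\}$, $u_H$ equals $u$ on $A_1$ and $u\circ p_H$ on $A_2$, and the change of variables $x\mapsto\bar x$ combined with the isometry property of $p_H$ yields the required balancing on $M\setminus H$. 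These identities imply $u_H\in \mathcal{N}$ and $\Phi(u_H)=\Phi(u)=c$, so that $u_H$ is again a least energy solution. The argument of Lemma \ref{positivity} applies to $u_H$, yielding $u_H\in C^2(M)$ and the classical form $-\Delta_g u_H=f(r,u_H)$.

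Set $w:=u_H-u\in C^2(M)$; by \eqref{eq:def-polarization}, $w\ge 0$ on $H$. Subtracting the equations for $u_H$ and $u$ and applying the mean value theorem gives
\[
-\Delta_g w=c_0(x)\,w\quad\text{on }M,\qquad c_0(x):=\int_0^1 f'_s\bigl(r,u+t(u_H-u)\bigr)\,dt\ge 0,
\]
where $c_0$ is locally bounded thanks to $f\in C^1$ and $u,u_H\in C^2(M)$. Since $\innt(H)$ is connected, the strong maximum principle applied to the nonnegative $w$ (viewed as a solution of a linear elliptic equation with bounded zeroth-order coefficient) yields the dichotomy: either $w>0$ throughout $\innt(H)$, or $w\equiv 0$ in $\innt(H)$.

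In the first case, $u_H>u$ in $\innt(H)$ translates, by the definition of $u_H$ on $H$, into $u\circ p_H>u$ in $\innt(H)$: this is (ii). In the second case, $u\ge u\circ p_H$ on $H$; since $p_H$ is an isometry preserving $r$, the composition $u\circ p_H$ is another $C^2$ solution of \eqref{equa}, and the same MVT and SMP argument applied to $\tilde w:=u-u\circ p_H\ge 0$ on $\innt(H)$ produces a second dichotomy: either $\tilde w>0$ on $\innt(H)$, which is (i), or $\tilde w\equiv 0$ on $\innt(H)$, giving $u=u\circ p_H$ on $H$ and, by applying $p_H$ once more, on all of $M$, which is (iii). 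The main technical point in the plan is the gradient polarization identity; it is delicate in general, but here it reduces cleanly to the explicit isometric structure of $p_H$ on $(M,g)$.
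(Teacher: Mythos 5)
Your proof is correct and follows essentially the same route as the paper: first show via the polarization identities that $u_H$ is again a least energy solution, then conclude with the strong maximum principle using $f'_s\ge 0$ from $(f1)$. The only (cosmetic) difference is in the second step, where the paper applies the maximum principle once to $w=|u-u\circ p_H|=2u_H-(u+u\circ p_H)$, for which $-\Delta_g w\ge 0$ on $H$ follows directly from the monotonicity of $f$ in $s$, whereas you run a two-stage dichotomy on $u_H-u$ and then $u-u\circ p_H$ via the mean value theorem; both yield the trichotomy.
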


\begin{proof} Let us define $M^+:=\{x\in M: u(x)\ge u_H(x)\}$ and $M^-:=\{x\in M: u(x)<u_H(x)\}$. Similarly to \cite[Proposition 2.3]{Smets-Willem}, by \eqref{metric} we have
\begin{equation} \label{eq:grad-uH}
\nabla_g u_H(x)=
\begin{cases}
\nabla_g u(x) & \qquad \text{a.e. on } (H\cap M^+)\cup [(M\setminus H)\cap M^-] \\[7pt]
\nabla_g u(p_H(x)) & \qquad \text{a.e. on } (H\cap M^-) \cup [(M\setminus H)\cap M^+] \, .
\end{cases}
\end{equation}
By \eqref{metric}, \eqref{eq:grad-uH} and the change of variable $y=p_H(x)$ we obtain
\begin{align} \label{eq:L2-grad}
& \int_M |\nabla_g u_H|_g^2 \, dV_g =\int_{H\cap M^+} |\nabla_g u|_g^2 \, dV_g+
\int_{(M\setminus H)\cap M^-} |\nabla_g u|_g^2 \, dV_g \\
\notag & \qquad +\int_{H\cap M^-} |\nabla_g u(p_H(x))|_g^2 \, dV_g(x)+\int_{(M\setminus H)\cap M^+} |\nabla_g u(p_H(x))|_g^2 \, dV_g(x) \\
\notag & \quad =\int_{H\cap M^+} |\nabla_g u|_g^2 \, dV_g+\int_{(M\setminus H)\cap M^-} |\nabla_g u|_g^2 \, dV_g \\
\notag & \qquad +\int_{(M\setminus H)\cap M^+} |\nabla_g u(y)|_g^2 \, dV_g(y)+\int_{H\cap M^-} |\nabla_g u(y)|_g^2 \, dV_g(y)\\
\notag &=\int_M |\nabla_g u|_g^2 \, dV_g \, .
\end{align}
In the same way we also obtain
\begin{equation} \label{eq:fF}
 \int_M F(r,u_H)\,dV_{g}=\int_M F(r,u)\,dV_{g} \,,\quad
 \int_M f(r,u_H)u_H\,dV_{g}=\int_M f(r,u)u\,dV_{g} \, .
\end{equation}
Combining \eqref{eq:L2-grad} and \eqref{eq:fF} we deduce that if $u\in \mathcal{N}$ is a minimizer of $\Phi$ on $\mathcal N$, then also $u_H\in \mathcal{N}$ is a minimizer of $\Phi$ on $\mathcal N$. Therefore, it follows that $u_H$ is a (positive) least energy solution
of \eqref{equa}, see Lemma \ref{l:relation-PS} for more more details.
Following \cite{Bartsch}, we consider
$$w:H\rightarrow \R\,,\quad  w:=|u-u \circ p_H|=2u_H-(u+u \circ p_H)\,.$$
By Lemma \ref{positivity} we have that $w\in C^2(M)$. Furthermore, from $(f1)$ and the fact that $p_H$ is an isometry in $M$, we deduce
\begin{align*}
& -\Delta_g w= -2\Delta_g u_H+\Delta_g u+\Delta_g(u\circ p_H)=2 f(r,u_H)-f(r,u)+(\Delta_g u)\circ p_H \\
& \quad
=[f(r,u_H)-f(r,u)]+[f(r,u_H)-f(r,u \circ p_H)]\geq 0\ \ \ {\rm on}\ H  \, .
\end{align*}

For every $x\in H$, let $B_x$ be a ball such that $x\in \overline{B}_x\subset H$. By the strong maximum principle for elliptic operators, see \cite{Gilb}, we deduce that either $w\equiv 0$ in $ \overline{B}_x$ or $w>0$ in $ B_x$ and hence we conclude that either $u\equiv u \circ p_H$ on $H$ or $|u-u \circ p_H|>0$ on $\text{int}(H)$.
\end{proof}

For any $\Theta_0 \in {\mathbb S}^{n-1}$ we now define
$$\cH(\Theta_0):= \{H \in \cH:\, \Theta_0 \in \innt(H)\}.$$
We will prove Theorem~\ref{main} with the help of the following
characterization that can be deduced by \cite[Proposition 2.4]{weth}.

\begin{lem}\label{sobol2}
Let $\Theta_0 \in {\mathbb S}^{n-1}$. A continuous function $v: M \to \R$ is foliated Schwarz
symmetric with
respect to $\Theta_0$ if and only if $v=v_H$ for every $H\in \cH(\Theta_0)$.
\end{lem}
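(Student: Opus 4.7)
The plan is to verify both directions by a geometric analysis on the unit sphere, exploiting that each polarization $v_H$ preserves the radial coordinate $r$, so the entire argument can be carried out on the spherical slices $\mathbb S^{n-1}$ at fixed $r$.

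For the forward direction ($\Rightarrow$), I would assume $v$ is foliated Schwarz symmetric with respect to $\Theta_0$, namely $v(r,\Theta)=V(r,\sigma)$ with $\sigma=\arccos(\Theta\cdot\Theta_0)$ and $V(r,\cdot)$ nonincreasing (the convention compatible with the conclusion of Theorem \ref{main}). Fixing $H=H_{\Theta_1}\in\cH(\Theta_0)$ and $x=(r,\Theta)\in H$, I would use $\overline{\Theta}=\Theta-2(\Theta\cdot\Theta_1)\Theta_1$ to compute directly
\[
\overline{\Theta}\cdot\Theta_0 = \Theta\cdot\Theta_0-2(\Theta\cdot\Theta_1)(\Theta_1\cdot\Theta_0)\le \Theta\cdot\Theta_0,
\]
the inequality coming from $\Theta\cdot\Theta_1\ge 0$ (since $x\in H$) and $\Theta_1\cdot\Theta_0>0$ (since $H\in\cH(\Theta_0)$). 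Hence $\sigma(\overline{x})\ge\sigma(x)$, and monotonicity of $V(r,\cdot)$ forces $v(\overline{x})\le v(x)$, so $v_H(x)=v(x)$; the analogous computation for $x\in M\setminus H$ gives $v_H\equiv v$ throughout.

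For the converse direction ($\Leftarrow$), I would first derive axial symmetry by an approximation argument. Fix $r>0$ and two distinct $\Theta,\Theta'\in\mathbb S^{n-1}$ with $\Theta\cdot\Theta_0=\Theta'\cdot\Theta_0$. The natural reflection sending $\Theta$ to $\Theta'$ has normal $\Theta_1:=(\Theta-\Theta')/|\Theta-\Theta'|$, which unfortunately satisfies $\Theta_1\cdot\Theta_0=0$, so $H_{\Theta_1}\notin\cH(\Theta_0)$. I would circumvent this by setting $\Theta_1^{\epsilon}:=(\Theta_1+\epsilon\Theta_0)/|\Theta_1+\epsilon\Theta_0|$, so that $H^{\epsilon}:=H_{\Theta_1^{\epsilon}}\in\cH(\Theta_0)$ for all small $\epsilon>0$. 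Because $\Theta\cdot\Theta_1=|\Theta-\Theta'|/2>0$, the point $\Theta$ lies in $H^{\epsilon}$ for $\epsilon$ small, so the polarization identity $v=v_{H^{\epsilon}}$ yields $v(r,\Theta)\ge v(r,\overline{\Theta}^{\epsilon})$, and continuity of $v$ together with $\overline{\Theta}^{\epsilon}\to\Theta'$ gives $v(r,\Theta)\ge v(r,\Theta')$; interchanging the two directions yields equality. To obtain the monotonicity of $V(r,\cdot)$, I would take two directions with $\sigma(\Theta)<\sigma(\Theta')$ and note that in this case the bisecting normal $\Theta_1=(\Theta-\Theta')/|\Theta-\Theta'|$ already satisfies $\Theta_1\cdot\Theta_0=(\cos\sigma(\Theta)-\cos\sigma(\Theta'))/|\Theta-\Theta'|>0$, so $H_{\Theta_1}\in\cH(\Theta_0)$ and the inequality $v(r,\Theta)\ge v(r,\Theta')$ follows directly without perturbation.

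The main delicate step is the perturbation argument in the converse direction: I must verify that $\overline{\Theta}^{\epsilon}\to\Theta'$ and that $\Theta\in H^{\epsilon}$ uniformly for all sufficiently small $\epsilon>0$, both of which hinge on the strict inequality $\Theta\cdot\Theta_1>0$ supplied by $\Theta\neq\Theta'$. Everything else reduces to elementary inner-product manipulations on $\mathbb S^{n-1}$, so I do not foresee any serious obstacle beyond this continuity bookkeeping.
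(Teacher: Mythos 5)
Your argument is correct and complete; note, however, that the paper does not actually prove this lemma, but states that it ``can be deduced by'' Weth's Proposition 2.4, so your self-contained verification is in effect a proof of that cited characterization, adapted to the model manifold $M$, where (as you observe) every polarization fixes $r$ and the whole matter reduces to the spheres $\{r\}\times{\mathbb S}^{n-1}$. Both directions check out: the forward implication rests on the identity $\overline{\Theta}\cdot\Theta_0=\Theta\cdot\Theta_0-2(\Theta\cdot\Theta_1)(\Theta_1\cdot\Theta_0)$, and the converse correctly combines the exact reflection $p_{H_{\Theta_1}}(\Theta)=\Theta'$ for $\Theta_1=(\Theta-\Theta')/|\Theta-\Theta'|$ (which lies in $\cH(\Theta_0)$ precisely when $\sigma(\Theta)<\sigma(\Theta')$, giving monotonicity) with the perturbed normals $\Theta_1^{\eps}$ and the continuity of $v$ to handle the degenerate case $\sigma(\Theta)=\sigma(\Theta')$, where the reflecting hyperplane contains $\Theta_0$ and so falls outside $\cH(\Theta_0)$; the limits $\Theta_1^{\eps}\to\Theta_1$, $\overline{\Theta}^{\eps}\to\Theta'$ and the strict inequality $\Theta\cdot\Theta_1=|\Theta-\Theta'|/2>0$ make that step airtight. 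One caveat worth recording: Definition \ref{d:foliated} as written only requires that $u$ depend on $(r,\sigma)$, with no monotonicity in $\sigma$; under that literal reading the ``only if'' direction of the lemma fails (take $v$ axially symmetric but increasing in $\sigma$). You adopt the standard convention --- axial symmetry together with $V(r,\cdot)$ nonincreasing --- which is the one used in Weth's cited proposition and the one under which the lemma is a genuine equivalence; in any case the direction actually invoked in the proof of Theorem \ref{main} is the ``if'' direction, which your two steps establish in full.
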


\bigskip
\textbf{Proof of Theorem \ref{main}.} It is not restrictive assuming that $u$ is a positive least energy solution to \eq{equa} since otherwise one may define $\tilde u=-u$ and obtain a positive least energy solution of
$$
-\Delta_g \tilde u=\widetilde f(r,\tilde u) \qquad \text{in } M
$$
where $\widetilde f(r,s):=-f(r,-s)$ defined for any $r>0$ and $s\in \R$ satisfies assumptions $(f1)-(f2)$.

Take $\Theta_0 \in \mathbb S^{n-1}$ such that
$$
u(1,\Theta_0)= \max \{u(1,\Theta)\,: \Theta \in {\mathbb S}^{n-1} \} \, .
$$
Then, for any $H\in \mathcal H(\Theta_0)$ there exists $y\in \text{int}(H)$ such that $u(y)\ge u(p_H(y))$ and hence
by Lemma \ref{strict-alt} we deduce that only case (i) or (iii) in the same lemma may occur; in both situations this yields
$$
u=u_H \qquad \text{for every $H \in \cH(\Theta_0)$.}
$$
Hence, $u$ is foliated Schwarz symmetric with respect to $\Theta_0$ by
Lemma \ref{sobol2}. Therefore we can write $u=u(r,\sigma)$, where $\sigma= \arccos(\Theta \cdot \Theta_0)$. It remains to prove
that
\begin{equation}
\label{eq:alternative}
\text{either $u$ is radial, or $u(r,\sigma)$ is strictly
decreasing in $\sigma \in (0,\pi)$ for $r>0$.}
\end{equation}
We follow the argument in \cite[p.204]{girao-weth}. We already know that no
half-space $H \subset \cH(\Theta_0)$ satisfies
property (ii) of Lemma \ref{strict-alt}. Moreover, if property (i) of this
lemma holds for all half-spaces $H \subset \cH(\Theta_0)$, then $u(r,\sigma)$ is strictly
decreasing in $\sigma \in (0,\pi)$ for every $r>0$. It remains to
consider the case where property (iii) of Lemma~\ref{strict-alt} holds
for some $H_0 \subset \cH(\Theta_0)$. Let $0<\sigma_0<\pi/2$ be the angle
formed by $\Theta_0$ and the hyperplane $\partial H_0$. Let
$\Theta_1= p_{H_0}(\Theta_0)$. Then $\arccos(\Theta_1 \cdot \Theta_0)=2\sigma_0$. Moreover,
(iii) implies that $u(r,\Theta_1)=u(r,\Theta_0)$ for $r>0$. Since $u$ is nonincreasing in the angle
$\sigma \in (0,\pi)$, we conclude that $u(r,\sigma)= u(r,0)$ for all
$\sigma \le 2 \sigma_0$. From Lemma \ref{strict-alt} we then deduce that
(iii) holds for all $H \subset \cH(\Theta_0)$ for
which the angle between $\Theta_0$ and $H$ is less then $2 \sigma_0$. Then, by the
same argument as before, $u(r,\sigma)= u(r,0)$ for all $\sigma \le \min\{4 \sigma_0,\pi\}$. Arguing
successively, in a finite number of steps
we obtain $u(r,\sigma)= u(r,0)$ for all $\sigma \le \pi$. This shows that
$u$ is radial and completes the proof of \eqref{eq:alternative}. \hfill $\Box$\par\vskip3mm

\bigskip

\textbf{Proof of Proposition \ref{separa}.} \label{s:separabili} We exploit the idea of \cite[Remark 6.3]{girao-weth}. It is not restrictive to assume $u>0$. By contradiction, let $u(r,\sigma)=R(r)h(\sigma)>0$. Since we are also assuming that $u$ is not radial, then by Theorem \ref{main} we deduce that $h$ is strictly decreasing in $[0,\pi]$. Few computations yield
\begin{align*}
& -\frac{\psi^2(r)}{\psi^{n-1}(r)R(r)}\frac{\partial}{\partial r}\left( R'(r)\psi^{n-1}(r)\right)-\frac{1}{\sin^{n-2}(\sigma)h(\sigma)}\frac{\partial}{\partial \sigma}
\left( h'(\sigma)\sin^{n-2}(\sigma)\right) \\
& \qquad = \psi^2(r)\frac{f(r, R(r)h(\sigma))}{R(r)h(\sigma)} \, .
\end{align*}
If $0\le \sigma_1<\sigma_2\le \pi$, then $h(\sigma_1)>h(\sigma_2)$ and hence for any $\overline r>0$ there exists $L>0$ such that
\begin{align*}
& 0<b(\sigma_1)-b(\sigma_2)= \psi^2(r)\left(\frac{f(r,R(r)h(\sigma_2))}{ R(r)h(\sigma_2)}-\frac{f(r,R(r)h(\sigma_1))}{ R(r)h(\sigma_1)}\right) \\
& \qquad \leq \psi^2(r) L R(r)(h(\sigma_1)-h(\sigma_2)) \qquad \text{for any } r\in (0,\overline r) \, ,
\end{align*}
where $b(\sigma):=\frac{1}{\sin^{n-2}(\sigma)h(\sigma)}\frac{\partial}{\partial \sigma}\left( h'(\sigma)\sin^{n-2}(\sigma)\right)$.
Namely, there exists $C>0$ such that $R(r)\geq C \psi^{-2}(r)$ for any $r\in (0,\overline r)$. This gives a contradiction as $r\rightarrow 0$ since $u$ is a classical solution of \eqref{equa} in view of Lemma \ref{positivity}. \hfill $\Box$\par\vskip3mm

\section{Proof of Theorem \ref{compact 1} and Proposition \ref{p:non-ex}} \label{s:existence-1}

We first state some basic facts. Let $w,v_0,v_1:\mathbb R^n\rightarrow (0,\infty)$ be measurable locally bounded functions. For $q\geq 1$, we denote by $L^q(\mathbb R^n;w)$ the set of all measurable functions $u$ on $\mathbb R^n$ such that
$$
\int_{\mathbb R^n} |u(y)|^q w(y) \,dy<\infty\,,
$$
and by $W^{1,2}(\mathbb R^n;v_0,v_1)$ the set of all functions $u\in L^2(\mathbb R^n;v_0)$ such that $|\nabla u|\in  L^2(\mathbb R^n;v_1)$. Then, we associate to Sobolev and $L^q$ spaces on the model manifold $M$ suitable weighted Sobolev and $L^q$ spaces on $\mathbb R^n$.

\begin{lem} \label{norme}
Set $\phi(r):=\frac{\psi(r)}{r}$ and suppose that $\phi(r)\geq 1$ for all $r>0$. For every $n\geq 3$, the following hold:
\begin{itemize}
\item[(i)] $\|u\|_{L^{q}(M)} =\|u\|_{L^{q}(\mathbb R^n; \phi^{n-1})}\qquad 1\leq q<\infty$;
\item[(ii)] $\|u\|_{H^1(M)}\geq \|u\|_{  W^{1,2}(\mathbb R^n;\phi^{n-1}, \phi^{n-3})    }\,.$
\end{itemize}
\end{lem}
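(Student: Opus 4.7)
The natural strategy is to write both norms in spherical coordinates around the pole $o$ (respectively around the origin of $\R^n$) and then compare the integrands pointwise using the single assumption $\phi\ge 1$. The key observation is that a point $x\in M$ with coordinates $(r,\Theta)$ corresponds bijectively to the point $y=r\Theta\in\R^n\setminus\{0\}$, and under this identification the two volume elements are
\begin{equation*}
dV_g=\psi(r)^{n-1}\,dr\,d\Theta, \qquad dy=r^{n-1}\,dr\,d\Theta,
\end{equation*}
so that $dV_g=\phi(r)^{n-1}\,dy$. This already explains the weight $\phi^{n-1}$ appearing in the statement.

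For part (i), fix $u:M\to\R$ measurable and identify it with the corresponding function on $\R^n$. Writing
\begin{equation*}
\int_M|u|^q\,dV_g=\int_{\mathbb S^{n-1}}\int_0^{\infty}|u(r,\Theta)|^q\psi(r)^{n-1}\,dr\,d\Theta
=\int_{\mathbb S^{n-1}}\int_0^{\infty}|u(r,\Theta)|^q\phi(r)^{n-1}\,r^{n-1}\,dr\,d\Theta,
\end{equation*}
and recognizing the last expression as $\int_{\R^n}|u(y)|^q\phi(|y|)^{n-1}\,dy$, gives the claimed identity. This step is a direct change of variable and carries no difficulty.

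For part (ii), I would use the coordinate expression for the Riemannian gradient,
\begin{equation*}
|\nabla_g u|_g^2=|\partial_r u|^2+\frac{1}{\psi(r)^2}|\nabla_\Theta u|^2,
\end{equation*}
where $|\nabla_\Theta u|$ denotes the spherical gradient on $\mathbb S^{n-1}$. Repeating the computation above,
\begin{equation*}
\int_M|\nabla_g u|_g^2\,dV_g=\int_{\R^n}\Bigl(|\partial_r u|^2\phi(r)^{n-1}+\frac{|\nabla_\Theta u|^2}{r^2}\phi(r)^{n-3}\Bigr)\,dy,
\end{equation*}
since $\psi^{n-1}=\phi^{n-1}r^{n-1}$ and $\psi^{n-3}=\phi^{n-3}r^{n-3}$. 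In Euclidean coordinates on $\R^n$, on the other hand, $|\nabla u|^2=|\partial_r u|^2+r^{-2}|\nabla_\Theta u|^2$, so comparing with $\int_{\R^n}|\nabla u|^2\phi^{n-3}\,dy$ reduces (ii) to showing
\begin{equation*}
\int_{\R^n}|\partial_r u|^2\bigl(\phi^{n-1}-\phi^{n-3}\bigr)\,dy\ge 0.
\end{equation*}
Since $n\ge 3$ and $\phi\ge 1$, we have $\phi^{n-1}-\phi^{n-3}=\phi^{n-3}(\phi^2-1)\ge 0$ pointwise, which closes the argument. There is no real obstacle here: the only place where the hypotheses $n\ge 3$ and $\phi\ge 1$ are used is precisely to ensure this pointwise inequality, which is exactly why the lemma is stated with the weights $\phi^{n-1}$ and $\phi^{n-3}$ rather than with a single weight.
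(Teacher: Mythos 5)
Your proof is correct and follows essentially the same route as the paper: both parts rest on writing the integrals in polar coordinates, noting $dV_g=\phi^{n-1}\,dy$, observing that the angular term carries the weight $\phi^{n-3}$ exactly, and using $\phi\ge 1$ with $n\ge 3$ to bound the radial term from below. No gaps.
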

\begin{proof}
The equality (i) is an immediate consequence of the formula for the Riemannian measure in polar coordinates. To prove (ii) notice that
$$
\begin{aligned}
\int_{M} |\nabla_g u |_{g}^2\,dV_{g}\,&=\,\int_{0}^{+\infty}\int_{{\mathbb S}^{n-1}} [u_r^2+ \, \frac{1}{\psi^{2}}|\nabla_{{\mathbb S}^{n-1}} u|^2]\,\psi^{n-1}\, dr\,d\Theta\\
&\geq \int_{0}^{+\infty}\int_{{\mathbb S}^{n-1}} [u_r^2+ \, \frac{1}{r^{2}}|\nabla_{{\mathbb S}^{n-1}} u|^2]\,r^{n-1}\,   \big(\frac{\psi}{r}\big)^{n-3}  dr\,d\Theta\\
&=\|\nabla u\|^2_{ L^2(\mathbb R^n; \phi^{n-3})   }\,.
\end{aligned}
$$
\end{proof}

We will exploit the following

\begin{prop}\label{embedding}
Let $n\geq 3$, $1<p\leq 2^*-1$ and $w,v_0,v_1:\mathbb R^n\rightarrow (0,\infty)$ be locally bounded radial weights on $\mathbb R^n$, i.e. measurable and a.e. positive functions in $\R^n$. Suppose there exists $R>0$, a positive measurable function $\delta$ defined on $\mathbb R^n\setminus B(0,R)$, $c_{\delta}\geq 1$ and $k_0>0$ such that
\begin{itemize}
\item[(i)] $k_0 \left(\frac{v_1(x)}{v_0(x)}\right)^{1/2}\leq \delta(x)\leq \frac{|x|}{3}\qquad for\, a.e.\, x\in \mathbb R^n\setminus B(0,R)$;
\item[(ii)] $c_{\delta}^{-1}\leq \frac{\delta(y)}{\delta(x)}\leq c_{\delta}\qquad for \,a.e.\, x\in \mathbb R^n\setminus B(0,R)\text{ and }y\in B(x,\delta(x))\,.$
\end{itemize}
Assume furthermore that there exist two positive measurable functions $b_0,b_1$ defined on $\mathbb R^n\setminus B(0,R)$
such that
\begin{equation}\label{wv}
w(y)\leq b_0(x)\,,\, b_1(x)\leq v_1(y)\, for \,a.e.\, x \, \in \mathbb R^n\setminus B(0,R)\text{ and } y\in B(x,\delta(x))\,.
\end{equation}
If
\begin{equation}\label{limitezero}
\lim_{m\rightarrow +\infty}  \sup_{|x|\geq m}  \frac{b_0(x)^{\frac{1}{p+1}}}{b_1(x)^{1/2}}\,\delta(x)^{  \frac{n}{p+1}-\frac{n}{2}+1 } =0\,,
\end{equation}
then the embedding $W^{1,2}(\mathbb R^n;v_0,v_1)\subset L^{p+1}(\mathbb R^n;w)$ is continuous and if $1<p< 2^*-1$, the embedding is also compact.
\end{prop}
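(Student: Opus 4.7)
The argument is a local-to-global Vitali covering scheme: cover $\R^n\setminus \overline{B(0,R)}$ by balls $B_k:=B(x_k,\delta_k)$ with $\delta_k:=\delta(x_k)$, prove a weighted Sobolev inequality on each $B_k$ by rescaling the standard unweighted one to the unit ball, and then sum using finite overlap. The bound $\delta(x)\le |x|/3$ in (i) keeps all balls uniformly away from the origin, while the comparability condition (ii) allows a standard Besicovitch/Vitali selection to produce a countable subcover with bounded multiplicity $N$, on which $\delta(y)$ is comparable to $\delta_k$ for $y\in B_k$.

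\textbf{Local weighted Sobolev inequality.} The translation-and-dilation $y=x_k+\delta_k z$ converts the classical Sobolev embedding $\|v\|_{L^{p+1}(B_1)}\le C(\|v\|_{L^2(B_1)}+\|\nabla v\|_{L^2(B_1)})$, which holds because $p+1\le 2^*$, into
\begin{equation*}
\|u\|_{L^{p+1}(B_k)}\le C\Bigl(\delta_k^{\frac{n}{p+1}-\frac{n}{2}}\|u\|_{L^2(B_k)}+\delta_k^{\frac{n}{p+1}-\frac{n}{2}+1}\|\nabla u\|_{L^2(B_k)}\Bigr).
\end{equation*}
Assumption \eqref{wv} bounds the gradient term by $\delta_k^{\frac{n}{p+1}-\frac{n}{2}+1}b_1(x_k)^{-1/2}\|\nabla u\|_{L^2(B_k;v_1)}$. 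Rewriting (i) as $v_0\ge k_0^2 v_1/\delta^2$ and combining with (ii) and \eqref{wv} gives $v_0(y)\ge (k_0^2/c_\delta^2)\, b_1(x_k)/\delta_k^2$ on $B_k$; this turns the $L^2$ term into $\delta_k^{\frac{n}{p+1}-\frac{n}{2}+1}b_1(x_k)^{-1/2}\|u\|_{L^2(B_k;v_0)}$ up to constants. Finally $w\le b_0(x_k)$ on $B_k$ gives $\|u\|_{L^{p+1}(B_k;w)}\le b_0(x_k)^{1/(p+1)}\|u\|_{L^{p+1}(B_k)}$, and combining the three bounds yields
\begin{equation*}
\|u\|_{L^{p+1}(B_k;w)}\le C\,A(x_k)\,\|u\|_{W^{1,2}(B_k;v_0,v_1)},\qquad A(x):=\frac{b_0(x)^{1/(p+1)}}{b_1(x)^{1/2}}\,\delta(x)^{\frac{n}{p+1}-\frac{n}{2}+1},
\end{equation*}
with $A$ precisely the expression appearing in \eqref{limitezero}.

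\textbf{Summation and compactness.} Raising to the power $p+1$ and summing, the elementary inequality $\sum_k a_k^{(p+1)/2}\le(\sum_k a_k)^{(p+1)/2}$ (valid since $(p+1)/2\ge 1$) together with the finite-overlap bound $\sum_k\|u\|_{W^{1,2}(B_k;v_0,v_1)}^2\le N\|u\|_{W^{1,2}(\R^n;v_0,v_1)}^2$ gives
\begin{equation*}
\|u\|_{L^{p+1}(\{|x|>R\};w)}\le C\Bigl(\sup_{|x|>R}A(x)\Bigr)\|u\|_{W^{1,2}(\R^n;v_0,v_1)},
\end{equation*}
and the supremum is finite by \eqref{limitezero}. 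Since all weights are locally bounded, the integral over $B(0,R)$ is handled by the classical unweighted Sobolev embedding, yielding continuity. For compactness when $p<2^*-1$, take a bounded sequence $(u_j)$ in $W^{1,2}(\R^n;v_0,v_1)$ and $\varepsilon>0$: condition \eqref{limitezero} supplies $m>R$ with $\sup_{|x|\ge m}A(x)<\varepsilon$, so the same summation estimate bounds the tails $\|u_j\|_{L^{p+1}(\{|x|\ge m\};w)}$ uniformly by $O(\varepsilon)$. On $\overline{B(0,m+1)}$ the Rellich--Kondrachov theorem (applicable because $p+1<2^*$) extracts a subsequence converging in $L^{p+1}(B(0,m+1))$, hence also in $L^{p+1}(B(0,m+1);w)$ since $w$ is locally bounded. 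A diagonal argument then yields a subsequence Cauchy in $L^{p+1}(\R^n;w)$.

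\textbf{Main obstacle.} The algebraic heart of the proof is the balancing of the two terms in the scaled Sobolev inequality: assumption (i) is used precisely so that the zeroth-order piece becomes commensurate with the gradient piece, producing the single exponent $\tfrac{n}{p+1}-\tfrac{n}{2}+1$ that appears in \eqref{limitezero}. A secondary technical point is the production of a Vitali subcover with multiplicity depending only on $n$ and $c_\delta$, which relies on (ii) ensuring that $\delta$ varies slowly on its own scale. Everything else is a matter of bookkeeping with scale-invariant estimates.
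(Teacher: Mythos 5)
Your argument is correct and is essentially the proof the paper has in mind: the paper simply defers to \cite[Theorem 18.7]{kufner}, and your covering by balls $B(x,\delta(x))$, with the rescaled local Sobolev inequality, the finite-overlap summation, and the tail estimate driven by \eqref{limitezero}, is precisely the mechanism of that theorem. The only point worth making explicit is that on the fixed ball $B(0,R)$ you also need $v_0,v_1$ bounded away from zero (automatic in the paper's application, where $v_0=\phi^{n-1}\ge 1$ and $v_1=\phi^{n-3}\ge 1$) so that the classical Sobolev and Rellich theorems control the unweighted local norms there.
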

\begin{proof}
The proof follows by slightly modifying the proof of \cite[Theorem 18.7]{kufner}.
\end{proof}

By combining Lemma \ref{norme} with Proposition \ref{embedding} we get
\begin{lem} \label{our embedding}
Let $n\geq 3$, $1<p\leq 2^*-1$ and $(M,g)$ be the $n$-dimensional Riemannian model defined by \eqref{metric} with $\psi$ satisfying assumption $(H)$. Let $W\in L^\infty_{\rm loc}\big([0,\infty)\big)$ be a nonnegative function such that (i)-(iii) of Theorem \eqref{compact 1} hold. Then, the embedding $H^1(M)\subset L^{p+1}(M;W)$ is continuous and if $1<p< 2^*-1$, the embedding is also compact.
\end{lem}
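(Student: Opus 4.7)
The plan is to reduce the embedding on $(M,g)$ to a weighted Euclidean Sobolev embedding via Lemma \ref{norme}, and then apply Proposition \ref{embedding}. By Lemma \ref{norme}(i) with the radial weight $W\phi^{n-1}$ (regarded as a function of $|x|$), the $L^{p+1}(M;W)$-norm coincides with the $L^{p+1}(\mathbb{R}^n; W\phi^{n-1})$-norm; by Lemma \ref{norme}(ii), the $H^1(M)$-norm dominates the $W^{1,2}(\mathbb{R}^n;\phi^{n-1},\phi^{n-3})$-norm. Thus it suffices to establish the continuous (and, for $1<p<2^*-1$, compact) embedding $W^{1,2}(\mathbb{R}^n;\phi^{n-1},\phi^{n-3}) \hookrightarrow L^{p+1}(\mathbb{R}^n;W\phi^{n-1})$.

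In Proposition \ref{embedding} I would set $v_0(x)=\phi(|x|)^{n-1}$, $v_1(x)=\phi(|x|)^{n-3}$ and $w(x)=W(|x|)\phi(|x|)^{n-1}$. Since $(v_1/v_0)^{1/2} = \phi^{-1}$, the natural choice is $\delta(x) = k_0/\phi(|x|)$ for a small constant $k_0>0$. Condition (i) of the proposition is automatic: assumption (i) of Theorem \ref{compact 1} gives $\delta(x) \le k_0$, which is bounded by $|x|/3$ for $|x|$ sufficiently large. Using the monotonicity in assumption (ii) of Theorem \ref{compact 1}, I would then take
$$
b_0(x) = W(|x|+\delta(x))\,\phi(|x|+\delta(x))^{n-1}, \qquad b_1(x) = \phi(|x|-\delta(x))^{n-3},
$$
and since $\delta(x)$ is small, both $b_0(x)$ and $b_1(x)$ are comparable (with constants independent of $x$) to $W(|x|)\phi(|x|)^{n-1}$ and to $\phi(|x|)^{n-3}$, respectively.

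A direct computation then reduces the limit condition \eqref{limitezero} to the decay of a single quantity: up to a multiplicative constant,
$$
\frac{b_0(x)^{1/(p+1)}}{b_1(x)^{1/2}}\,\delta(x)^{n/(p+1)-n/2+1} \;\le\; C\,\bigl[W(|x|)\,\phi(|x|)^{(p-1)/2}\bigr]^{1/(p+1)},
$$
since the net exponent of $\phi$ equals $\frac{n-1}{p+1} - \frac{n-3}{2} - \bigl(\frac{n}{p+1}-\frac{n}{2}+1\bigr) = \frac{p-1}{2(p+1)}$. By assumption (iii) of Theorem \ref{compact 1}, $W(r)\phi(r)^{(p-1)/2}\to 0$ as $r\to+\infty$, so \eqref{limitezero} holds and Proposition \ref{embedding} delivers continuity in general and compactness when $1<p<2^*-1$.

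The main obstacle I anticipate is verifying the local doubling condition (ii) of Proposition \ref{embedding}, which asks that $\delta(y)/\delta(x)$ stay in a fixed interval for $y\in B(x,\delta(x))$. For $\delta = k_0/\phi$, this reduces via the mean value theorem and the smoothness of $\psi$ in (H) to the boundedness of $\phi'/\phi^2$ along the relevant rays; this is mild and can be checked directly for all the models listed in Remark \ref{r:example}. If needed, one may shrink $k_0$, or replace $\phi$ by a nondecreasing majorant inside the definition of $\delta$, without affecting the verification of (i) or of the limit \eqref{limitezero}.
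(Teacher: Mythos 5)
Your proof follows essentially the same route as the paper: reduce to $\R^n$ via Lemma \ref{norme}, then apply Proposition \ref{embedding} with exactly the same choices $v_0=\phi^{n-1}$, $v_1=\phi^{n-3}$, $w=W\phi^{n-1}$, $\delta=k_0\phi^{-1}$ (the paper takes $k_0=1$ and $R=3$, which works since $\phi\ge 1$) and the same $b_0,b_1$, and your exponent computation yielding $\bigl[W\phi^{(p-1)/2}\bigr]^{1/(p+1)}$ is correct. The doubling condition (ii) of Proposition \ref{embedding} that you flag as the remaining obstacle is likewise left unverified in the paper's own two-line proof, so your extra caution there is a refinement rather than a deviation.
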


\begin{proof}
By Lemma \ref{norme} (ii) we have that $H^1(M)\subset W^{1,2}(\mathbb R^n;\phi^{n-1},  \phi^{n-3} )$. On the other hand, by applying Proposition \ref{embedding} with $R=3$ and
\begin{align*}
& \delta(x)=\phi^{-1}(|x|) \, ,   \quad w(x)=\phi^{n-1}(|x|) \, W(|x|)\, , \\
& v_0(x)=v_0(|x|)=\phi^{n-1}(|x|) \, , \quad v_1(x)=v_1(|x|)= \phi^{n-3}(|x|) \, , \\
& b_0(x)=\phi^{n-1}(|x|+\delta(x)) W(|x|+\delta(x)) \, , \quad b_1(x)=\phi^{n-3}(|x|-\delta(x))\,,
\end{align*}
we deduce that the embedding $W^{1,2}(\mathbb R^n;\phi^{n-1},  \phi^{n-3}  )\subset L^{p+1}(\mathbb R^n; \phi^{n-1}W)$ is continuous and if $1<p< 2^*-1$, it is also compact. This, combined with Lemma \ref{norme} (i), proves the statement.
\end{proof}

Let us consider the Nehari manifold $\mathcal N$ defined in \eqref{eq:Nehari} and let us define the functional $I:H^1(M)\to \R$ by
\begin{equation*}
I(u):=\langle \Phi'(u),u\rangle \qquad \text{for any } u\in H^1(M) \, ,
\end{equation*}
where $\Phi'$ denotes the Fr\'echet derivative of $\Phi$. In this way we have $\mathcal N=I^{-1}(0)$ and thanks to condition $(f2)$ we also have $I'(u)\neq 0$ for any $u\in \mathcal N$. In the sequel we use the notation
\begin{equation} \label{eq:sfera-H1}
S:=\{v\in H^1(M): \|v\|_{H^1}=1\}   \, .
\end{equation}
In the next lemma we prove some properties of the Nehari manifold exploiting in the proofs the results contained in \cite{Sz}.

\begin{lem}  \label{l:nehari-1}
Suppose that all the assumptions of Theorem \ref{compact 1} are satisfied. Then we have
\begin{itemize}
\item[(i)] the functional $\Phi$ introduced in \eqref{eq:Phi} is well defined on $H^1(M)$ and it is of class $C^1$ over $H^1(M)$;

\item[(ii)] the set $\mathcal N$ defined in \eqref{eq:Nehari} is nonempty and for any $v\in H^1(M)\setminus \{0\}$ there exists a unique $t_v>0$ such that $t_v v\in \mathcal N$;

\item[(iii)] if for any $v\in H^1(M)\setminus \{0\}$ we define $\phi_v(t):=\Phi(tv)$ for any $t>0$, then $\phi_v\in C^1(0,\infty)$ and it satisfies
$\phi_v'(t)>0$ for any $t\in (0,t_v)$, $\phi_v'(t_v)=0$ and $\phi_v'(t)<0$ for any $t>t_v$;

\item[(iv)] the following estimates hold true:
\begin{equation*}
\inf_{v\in\mathcal N} \Phi(v)>0 \, , \qquad  \inf_{v\in \mathcal N} \|v\|_{H^1}>0\, , \qquad \sup_{v\in K} \|t_v v\|_{H^1}<+\infty
\end{equation*}
for any $K\subset S$ compact in $S$ with $S$ as in \eqref{eq:sfera-H1};

\item[(v)] the map
$$
m:S\to \mathcal N \, ,\quad m(v):=(t_v v)_{|S}
$$
is a homeomorphism from $S$ onto $\mathcal N$ such that $m^{-1}(v)=v/\|v\|_{H^1}$ for any $v\in S$.
\end{itemize}
\end{lem}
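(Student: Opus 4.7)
The plan is to organize (i)--(v) as successive applications of the abstract Nehari manifold framework from \cite{Sz}, whose hypotheses reduce here to three ingredients: the growth bound from $(f3)$, the strict Nehari monotonicity built into $(f2)$, and the Ambrosetti--Rabinowitz estimate $(f4)$. The continuous and compact embedding $H^1(M)\hookrightarrow L^{p+1}(M;W)$ provided by Lemma~\ref{our embedding} is what converts these pointwise conditions on $f$ into the functional estimates needed on $H^1(M)$.

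For (i) I would integrate $(f3)$ to obtain $|F(r,s)|\le\tfrac{\lambda+C}{2}s^2+\tfrac{W(r)}{p+1}|s|^{p+1}$ and $|f(r,s)|\le(\lambda+C)|s|+W(r)|s|^p$, and combine these with the embeddings $H^1(M)\hookrightarrow L^2(M)$ and $H^1(M)\hookrightarrow L^{p+1}(M;W)$ to check that $\Phi$ is finite on $H^1(M)$; standard Nemitski/dominated-convergence arguments then upgrade this to $\Phi\in C^1(H^1(M))$ with the expected derivative. For (ii)--(iii), I would study $\phi_v(t)=\Phi(tv)$ for fixed $v\in H^1(M)\setminus\{0\}$. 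Condition $(f2)$ is equivalent to $s\mapsto f(r,s)/s$ being strictly increasing on $(0,\infty)$ and strictly decreasing on $(-\infty,0)$, so the map $t\mapsto t^{-1}\phi_v'(t)=\int_M|\nabla_g v|_g^2\,dV_g-\int_M t^{-1}f(r,tv)v\,dV_g$ is strictly decreasing on $(0,\infty)$; this gives the uniqueness of the critical point $t_v$ and the sign pattern claimed in (iii). Existence of $t_v$ then follows from $\phi_v'(t)>0$ for small $t$ (via the coercivity of $\int(|\nabla_g v|_g^2-\lambda v^2)\,dV_g$ discussed below, together with higher-order control from $(f3)$) and $\phi_v(t)\to-\infty$ as $t\to\infty$, a direct consequence of the AR lower bound $H(r,ts)\ge t^\mu H(r,s)$ for $t\ge 1$ obtained by integrating $(f4)$.

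For (iv) I would use the AR trick: for $v\in\mathcal N$,
$$\Phi(v)=\Phi(v)-\tfrac{1}{\mu}\langle \Phi'(v),v\rangle\ge \Big(\tfrac{1}{2}-\tfrac{1}{\mu}\Big)\int_M\bigl(|\nabla_g v|_g^2-\lambda v^2\bigr)\,dV_g.$$
The quadratic form on the right is coercive on $H^1(M)$ thanks to $\lambda<\lambda_1(M)$: if $\lambda\le 0$ it dominates $\min(1,|\lambda|)\|v\|_{H^1}^2$ directly, while if $0<\lambda<\lambda_1(M)$ the variational characterization \eqref{lambda1} yields $\|v\|_{L^2}^2\le \lambda_1(M)^{-1}\|\nabla_g v\|_{L^2}^2$ and the form controls a positive multiple of $\|v\|_{H^1}^2$. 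This proves $\inf_{\mathcal N}\Phi>0$, and the bound $\inf_{\mathcal N}\|v\|_{H^1}>0$ follows by inserting the growth bounds from $(f3)$ into the Nehari identity $\int|\nabla_g v|_g^2\,dV_g=\int f(r,v)v\,dV_g$ and applying Lemma~\ref{our embedding}.

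The main obstacle is the last estimate of (iv), $\sup_{v\in K}\|t_v v\|_{H^1}<\infty$ for compact $K\subset S$. I would argue by contradiction: if $v_n\in K$ and $t_n:=t_{v_n}\to\infty$, then compactness of $K$ in $H^1(M)$ yields, along a subsequence, $v_n\to v_*\in K\subset S$ strongly in $H^1(M)$ and hence in $L^2(M)$, with $v_*\not\equiv 0$. On the positive-measure set $\{v_*\neq 0\}$, the AR inequality forces $F(r,t_nv_n(x))/t_n^2\to+\infty$ pointwise a.e., so Fatou combined with the quadratic upper bound on $\tfrac12 t_n^2\|\nabla_g v_n\|_2^2$ gives $\Phi(t_nv_n)/t_n^2\to-\infty$, contradicting $\Phi(t_nv_n)\ge\inf_{\mathcal N}\Phi>0$. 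Once (i)--(iv) are in hand, part (v) follows from the general arguments in \cite{Sz}: the map $m$ is bijective by (ii) with $m^{-1}(u)=u/\|u\|_{H^1}$, and continuity of $m$ reduces to continuity of $v\mapsto t_v$, which follows from the implicit function theorem applied to $I(tv)=0$ (whose $t$-derivative at $t=t_v$ is nonzero by the strict monotonicity of (iii)) together with the uniform bound just established.
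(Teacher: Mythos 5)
Your proposal is correct and follows essentially the same route as the paper: the growth estimates from $(f3)$ combined with Lemma~\ref{our embedding} for (i), the fibering-map analysis with the Ambrosetti--Rabinowitz condition giving $\phi_v(t)\to-\infty$ and $(f2)$ giving uniqueness in (ii)--(iii), a Fatou-based contradiction for the compact-set bound in (iv), and the Szulkin--Weth Nehari machinery for (v). The only cosmetic differences are that you phrase uniqueness via strict monotonicity of $s\mapsto f(r,s)/s$ rather than the equivalent computation $\langle I'(u),u\rangle<0$ on $\mathcal N$, and you derive $\inf_{\mathcal N}\Phi>0$ from the identity $\Phi(v)-\mu^{-1}\langle\Phi'(v),v\rangle$ rather than from the mountain-pass lower bound \eqref{eq:MP-structure} (which is fine, provided it is read together with the bound $\inf_{\mathcal N}\|v\|_{H^1}>0$ that you establish immediately afterwards).
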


\begin{proof} {\bf (i)} By $(f3)$ we have that
\begin{equation} \label{eq:est-f-F}
|f(r,s)|\le C\left(|s|+\tfrac{W(r)}{p}|s|^p\right)  \quad \text{and} \quad |F(r,s)|\le C\left(\tfrac{s^2}2+\tfrac{W(r)}{p(p+1)} |s|^{p+1}\right) \quad
\end{equation}
for any $r>0$ and $s\in \R$. Combining these two estimates with Lemma \ref{our embedding} the proof of (i) follows in a standard way.

{\bf (ii)-(iii)} Consider an arbitrary nontrivial function $v\in H^1(M)$ and consider the function $\phi_v(t):=\Phi(tv)$ defined for any $t>0$.
By \eq{eq:est-f-F}, Lemma \ref{our embedding} and the fact that $\lambda<\lambda_1(M)$ we obtain
\begin{equation} \label{eq:MP-structure}
\Phi(u)\ge C_1 \|u\|_{H^1}^2-C_2 \|u\|_{H^1}^{p+1} \qquad \text{for any } u\in H^1(M)
\end{equation}
for some suitable constants $C_1,C_2>0$. This shows that $\Phi$ is strictly positive in a neighborhood of $u=0$ and, in turn, also the function $\phi_v$ is positive in a right neighborhood of $t=0$.
On the other hand by $(f4)$ and integration it follows
\begin{equation*}
H(r,s) \ge \Upsilon(r)|s|^\mu   \qquad \text{for any } r>0 \ \text{and} \ |s|\ge \sigma
\end{equation*}
where $\sigma>0$ is fixed arbitrarily and $\Upsilon(r):=\min\left\{\tfrac{H(r,\sigma)}{\sigma^\mu},\tfrac{H(r,-\sigma)}{\sigma^\mu}\right\}>0$.
Therefore, for $v\in H^1(M)\setminus\{0\}$ we have
\begin{align} \label{eq:Phi(tv)}
\phi_v(t)&=\Phi(tv)\le \frac{1}2 \left(\int_M |\nabla_g v|_g^2 \, dV_g-\lambda \int_M v^2 dV_g\right)t^2 \\
\notag & -\int_{\{|tv|<\sigma\}} H(r,tv)\, dV_g
-\left(\int_{\{|tv|\ge \sigma\}} \Upsilon(r)|v|^\mu dV_g\right)t^\mu\to -\infty
\end{align}
as $t\to +\infty$ being $\mu>2$ and $\int_{\{|tv|<\sigma\}} H(r,tv)\, dV_g=O(t^2)$ as $t\to +\infty$ in view of \eqref{eq:est-f-F} and since, by (i) and (iii) in Theorem \ref{compact 1}, $W$ is globally bounded. Summarizing $\phi_v\in C^1(0,\infty)$, $\phi_v(0)=0$, $\phi_v$ is positive in a right neighborhood of $t=0$ and eventually negative as $t\to +\infty$ and hence there exists $t_v>0$ such that $\phi_v'(t_v)=0$. This implies $t_v v\in \mathcal N$ thus proving that $\mathcal N\neq \emptyset$. It remains to prove that $t_v$ is the unique stationary point of $\phi_v$ in $(0,+\infty)$. To this purpose, for any $v\in H^1(M)\setminus\{0\}$ we define the function $\chi_v(t):=I(tv)$ for any $t>0$ in such a way that stationary points of $\phi_v$ coincide with zero points of $\chi_v$. By $(f2)$ we have
\begin{align}  \label{eq:C1-manifold}
\langle I'(u),u \rangle &=2\int_M |\nabla_g u|_g^2 \, dV_g-\int_M f'_s(r,u)u^2 dV_g-\int_M f(r,u)u \, dV_g\\
\notag & = \int_M [f(r,u)u-f'_s(r,u)u^2]\, dV_g<0 \qquad \text{for any } u\in \mathcal N   \, .
\end{align}
This implies that if $v\in H^1(M)\setminus \{0\}$ and $t>0$ is such that $\chi_v(t)=0$ then $tv\in \mathcal N$ and $\chi_v'(t)=\frac 1t \langle I'(tv),tv\rangle<0$ thus proving that $\chi_v$ admits a unique positive zero point. This completes the proof of (ii) and (iii).

{\bf (iv)} The first two inequalities in (iv) follows from \eqref{eq:MP-structure} and (iii). In order to prove the third inequality let us proceed by contradiction by supposing that there exists a sequence $\{v_k\}\subset K$ such that $v_k\to v\in K$ and $t_{v_k} \to +\infty$ as $k\to +\infty$. For simplicity in the rest of the proof we write $t_k$ in place of $t_{v_k}$. Proceeding as in \eqref{eq:Phi(tv)}, as $k\to +\infty$ we obtain
\begin{align}  \label{eq:Phi(tkvk)}
\Phi(t_k v_k) & \le \frac{1}2 \left(\int_M |\nabla_g v_k|_g^2 \, dV_g-\lambda \int_M v_k^2 dV_g\right)t_k^2 \\
\notag & -\int_{\{|t_k v_k|<\sigma\}} H(r,t_k v_k)\, dV_g
-\left(\int_{\{|t_k v_k|\ge \sigma\}} \Upsilon(r)|v_k|^\mu dV_g\right)t_k^\mu\to -\infty
\end{align}
since
\begin{align*}
& \lim_{k\to +\infty}\left(\int_M |\nabla_g v_k|_g^2 \, dV_g-\lambda \int_M v_k^2 \, dV_g\right)=\int_M |\nabla_g v|_g^2 \, dV_g-\lambda \int_M v^2 dV_g \, , \\[8pt]
& \int_{\{|t_k v_k|<\sigma\}} H(r,t_k v_k)\, dV_g=O(t_k^2) \qquad \text{as } k\to +\infty \, , \quad \text{and} \\[8pt]
& \liminf_{k\to +\infty} \int_{\{|t_k v_k|\ge \sigma\}} \Upsilon(r)|v_k|^\mu dV_g \ge \int_M \Upsilon(r)|v|^\mu dV_g>0
\end{align*}
where we used the strong convergence $v_k\to v$ in $H^1(M)$ and the Fatou Lemma. The proof is complete since \eqref{eq:Phi(tkvk)} contradicts the fact that $\phi$ is positive on $\mathcal N$.

{\bf (v)} It follows immediately applying Proposition 8 in \cite{Sz}. Indeed assumptions ($A_2$)-($A_3$) in \cite{Sz} are an immediate consequence of (iii) and (iv).
\end{proof}

Next we define $\Psi:S\to \R$ by putting $\Psi(v):=\Phi(m(v))$ for any $v\in S$. We say that a sequence $\{v_k\}\subset S$ is a Palais-Smale sequence for $\Psi$ if $\{\Psi(v_k)\}$ is bounded and $\Psi'(v_k)\to 0$ as $k\to +\infty$, i.e.
$$
\lim_{k\to +\infty} \ \sup_{w\in T_{v_k}S} \frac{|\langle \Psi'(v_k),w\rangle|}{\|w\|_{H^1}}=0
$$
where $T_{v_k}S$ denotes the tangent space to $S$ at $v_k$. In the next result we show that there is a strict relationship between Palais-Smale sequences for $\Psi$ and $\Phi$.

\begin{lem} \label{l:relation-PS}
Suppose that all the assumptions of Theorem \ref{compact 1} are satisfied. Then we have
\begin{itemize}
\item[(i)] if $\{u_k\}\subset S$ is Palais-Smale sequence for $\Psi$ then $\{m(u_k)\}$ is a Palais-Smale sequence for $\Phi$;

\item[(ii)] if $u\in S$ is a critical point for $\Psi$ then $m(u)$ is a nontrivial critical point for $\Phi$;

\item[(iii)] if $u\in \mathcal N$ is a constrained critical point for $\Phi$ then $u$ is a critical point for $\Phi$.
\end{itemize}
\end{lem}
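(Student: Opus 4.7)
The plan is to reduce the statement to the abstract Nehari-manifold framework of \cite{Sz}, whose required hypotheses have already been verified in Lemma \ref{l:nehari-1}. The central ingredient is the identity
$$\langle \Psi'(v), w\rangle = \|m(v)\|_{H^1} \, \langle \Phi'(m(v)), w\rangle \qquad \text{for every } v\in S \text{ and every } w\in T_v S,$$
from which parts (i) and (ii) will follow, while (iii) is an independent Lagrange-multiplier argument.

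I would begin with (iii). If $u\in \mathcal N$ is a constrained critical point of $\Phi$, the Lagrange multiplier rule (applicable because $\mathcal N = I^{-1}(0)$ is a $C^1$-manifold thanks to \eqref{eq:C1-manifold}) yields $\alpha\in \R$ with $\Phi'(u)=\alpha I'(u)$. Pairing with $u$ and using $\langle \Phi'(u),u\rangle = I(u) = 0$ forces $\alpha\langle I'(u),u\rangle = 0$; but \eqref{eq:C1-manifold} gives $\langle I'(u),u\rangle < 0$, so $\alpha = 0$ and hence $\Phi'(u)=0$.

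Next I would establish the displayed identity. The map $v\mapsto t_v$ is of class $C^1$ on $S$ by the implicit function theorem applied to $I(tv)=0$, the non-degeneracy $\partial_t I(t_v v)\neq 0$ being precisely the strict-sign statement $\chi_v'(t_v)<0$ noted in the proof of Lemma \ref{l:nehari-1}(iii). Differentiating $\Psi(v)=\Phi(t_v v)$ along $w\in T_v S$ by the chain rule,
$$\langle \Psi'(v),w\rangle = (Dt_v[w])\,\langle \Phi'(t_v v),v\rangle + t_v\,\langle \Phi'(t_v v),w\rangle,$$
and the first summand vanishes because $t_v v\in\mathcal N$ gives $\langle \Phi'(t_v v),t_v v\rangle=0$. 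From this, (ii) is immediate: if $\Psi'(u)=0$ then $\Phi'(m(u))$ annihilates $T_u S$, and combined with $\langle \Phi'(m(u)),u\rangle =0$ this forces $\Phi'(m(u))=0$; nontriviality holds since $m(u)\in \mathcal N$ excludes the origin.

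For (i), given a Palais-Smale sequence $\{v_k\}\subset S$ for $\Psi$, boundedness of $\Psi(v_k)=\Phi(m(v_k))$ together with the Ambrosetti-Rabinowitz condition $(f4)$ applied to $u_k:=m(v_k)\in \mathcal N$ gives in the standard way that $\{u_k\}$ is bounded in $H^1(M)$, hence so is $t_{v_k}=\|u_k\|_{H^1}$; Lemma \ref{l:nehari-1}(iv) provides a uniform lower bound $t_{v_k}\geq \delta>0$. For arbitrary $w\in H^1(M)$, decomposing $w=\langle w,v_k\rangle_{H^1} v_k + w_\perp$ with $w_\perp\in T_{v_k}S$ and using $\langle \Phi'(u_k),v_k\rangle = 0$ yields
$$|\langle \Phi'(u_k),w\rangle| = |\langle \Phi'(u_k),w_\perp\rangle| = \frac{1}{t_{v_k}}|\langle \Psi'(v_k),w_\perp\rangle| \leq \frac{1}{\delta}\,\|\Psi'(v_k)\|\,\|w\|_{H^1},$$
so $\Phi'(u_k)\to 0$ in $H^{-1}(M)$. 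The main obstacle I expect is the clean derivation of the gradient identity, in particular the smooth dependence $v\mapsto t_v$, and the uniform upper/lower bounds on $t_{v_k}$ along Palais-Smale sequences; the remainder is routine bookkeeping from the Szulkin-Weth machinery.
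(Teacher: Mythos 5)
Your proposal is correct and follows essentially the same route as the paper: part (iii) is verbatim the paper's Lagrange-multiplier argument based on \eqref{eq:C1-manifold}, and for (i)--(ii) you simply unpack the Szulkin--Weth machinery (the identity $\langle \Psi'(v),w\rangle = t_v\langle\Phi'(m(v)),w\rangle$ on $T_vS$ and the lower bound $t_{v_k}\ge\delta$ from Lemma \ref{l:nehari-1}(iv)) that the paper invokes as Corollary 10 of \cite{Sz}. The only difference is presentational: the paper cites that corollary as a black box after checking its hypotheses ($A_2$)--($A_3$) via Lemma \ref{l:nehari-1}(iii)--(iv), whereas you rederive its content directly, legitimately using the implicit function theorem for the $C^1$ dependence $v\mapsto t_v$ since $\chi_v'(t_v)<0$.
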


\begin{proof} The proof of (i)-(ii) follows immediately applying Corollary 10 in \cite{Sz}. Indeed assumptions ($A_2$)-($A_3$) in \cite{Sz} are an immediate consequence of (iii) and (iv) in Lemma \ref{l:nehari-1}. In order to prove (iii) we observe that if $u\in \mathcal N$ is a constrained critical point for $\Phi$ then by the Lagrange Multiplier Method there exists $\kappa \in \R$ such that $\Phi'(u)=\kappa I'(u)$. In particular
$$
0=I(u)=\langle \Phi'(u),u\rangle=\kappa \langle I'(u),u\rangle
$$
and by \eqref{eq:C1-manifold} we know that $\langle I'(u),u\rangle\neq 0$ thus proving that $\kappa=0$ and, in turn, that $\Phi'(u)=0$.
\end{proof}


Then we prove that the functional $\Phi$ satisfies the Palais-Smale condition.

\begin{lem} \label{l:PS-validity}
Suppose that all the assumptions of Theorem \ref{compact 1} are satisfied.
Then the functional $\Phi$ satisfies the Palais-Smale condition.
\end{lem}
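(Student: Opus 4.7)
The plan is to follow the standard Palais--Smale scheme, exploiting the compact embedding from Lemma \ref{our embedding}, the Ambrosetti--Rabinowitz condition $(f4)$, and the spectral gap $\lambda<\lambda_1(M)$ encoded in $(f3)$.

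Let $\{u_k\}\subset H^1(M)$ satisfy $\Phi(u_k)=O(1)$ and $\Phi'(u_k)\to 0$ in $H^{-1}(M)$, and split $f(r,s)=\lambda s+h(r,s)$. For boundedness I would compute
$$\Phi(u_k)-\frac{1}{\mu}\langle \Phi'(u_k),u_k\rangle=\left(\frac{1}{2}-\frac{1}{\mu}\right)Q(u_k)+\int_M\left[\frac{1}{\mu} h(r,u_k)u_k-H(r,u_k)\right]\,dV_g,$$
where $Q(u):=\int_M |\nabla_g u|_g^2\,dV_g-\lambda\int_M u^2\,dV_g$. By $(f4)$ the integral term is nonnegative, and since $\lambda<\lambda_1(M)$ the variational characterization \eqref{lambda1} together with the Poincar\'e-type inequality $\lambda_1(M)\int_M v^2\,dV_g\le \int_M |\nabla_g v|_g^2\,dV_g$ makes $Q^{1/2}$ equivalent to the $H^1$-norm. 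Hence $\|u_k\|_{H^1}\le C$; up to a subsequence $u_k\rightharpoonup u$ weakly in $H^1(M)$, and since $1<p<2^*-1$ Lemma \ref{our embedding} gives $u_k\to u$ strongly in $L^{p+1}(M;W)$.

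Next I would verify that $u$ is a critical point of $\Phi$: testing against $\varphi\in C_c^\infty(M)$, weak convergence handles the gradient and linear terms, while local $L^q$ convergence for $q<2^*$, the growth $|f(r,s)|\le C'(|s|+W(r)|s|^p)$ (which follows from integrating $(f3)$), and dominated convergence send $\int_M f(r,u_k)\varphi\,dV_g\to \int_M f(r,u)\varphi\,dV_g$, so $\Phi'(u)=0$. Testing $\Phi'(u_k)-\Phi'(u)$ against the bounded sequence $u_k-u$ then yields
$$\int_M |\nabla_g(u_k-u)|_g^2\,dV_g=\int_M [f(r,u_k)-f(r,u)](u_k-u)\,dV_g+o(1).$$
By the upper bound on $f'_s$ in $(f3)$ and the mean value theorem, the integrand on the right is at most $[\lambda+W(r)(|u_k|^{p-1}+|u|^{p-1})](u_k-u)^2$, and H\"older's inequality with conjugate exponents $(p+1)/(p-1)$ and $(p+1)/2$, combined with the strong convergence in $L^{p+1}(M;W)$, sends the weighted piece to $0$.

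This reduces the question to $\int_M |\nabla_g(u_k-u)|_g^2\,dV_g\le \lambda\int_M (u_k-u)^2\,dV_g+o(1)$, and the spectral gap $\lambda<\lambda_1(M)$ closes the argument: if $\lambda<0$ the inequality directly forces both the gradient and the $L^2$ norms of $u_k-u$ to zero, while if $0\le \lambda<\lambda_1(M)$ then $\lambda_1(M)>0$ and Poincar\'e gives $(1-\lambda/\lambda_1(M))\int_M |\nabla_g(u_k-u)|_g^2\,dV_g\le o(1)$, with $L^2$ convergence following from Poincar\'e once more. In every case $u_k\to u$ in $H^1(M)$. The main obstacle is the strong-convergence step: the linear part of $h$ survives in the estimate and produces a residual $\lambda\int_M (u_k-u)^2\,dV_g$ that cannot be killed by compactness alone, and it is precisely the spectral gap $\lambda<\lambda_1(M)$ imposed in $(f3)$ that allows its absorption.
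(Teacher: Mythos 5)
Your proof is correct and follows essentially the same route as the paper: boundedness of the Palais--Smale sequence via the Ambrosetti--Rabinowitz combination $\Phi-\tfrac1\mu\langle\Phi',\cdot\rangle$ together with the norm equivalence coming from $\lambda<\lambda_1(M)$, followed by the compact embedding $H^1(M)\subset L^{p+1}(M;W)$ of Lemma \ref{our embedding}. The only divergence is the final step, where the paper passes to the limit in the quadratic form $\int_M|\nabla_g v_k|_g^2\,dV_g-\lambda\int_M v_k^2\,dV_g$ and invokes weak-plus-norm convergence in the equivalent Hilbert norm, whereas you test $\Phi'(u_k)-\Phi'(u)$ against $u_k-u$ and control the nonlinearity via the mean value theorem, $(f3)$ and H\"older; both closings are standard and equivalent here.
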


\begin{proof} Let $\{v_k\}$ be a Palais-Smale sequence for $\Phi$. We first prove that $\{v_k\}$ is bounded in $H^1(M)$.
Since $\{\Phi(v_k)\}$ is bounded and $\Phi'(v_k)\to 0$ is $(H^1(M))'$ as $k\to +\infty$, then by $(f3)-(f4)$ we obtain
\begin{align*}
C_1& +o(\|v_k\|_{H^1}) \ge \mu \Phi(v_k)-\langle \Phi'(v_k),v_k\rangle \\
& =\frac{\mu-2}2 \left(\int_M |\nabla_g v_k|_g^2\, dV_g-\lambda \int_M v_k^2 \, dV_g\right)\\
&+\int_M [h(r,v_k)v_k-\mu H(r,v_k)]\, dV_g
\ge C_2 \|v_k\|_{H^1}^2
\end{align*}
for some suitable positive constants $C_1,C_2$. This proves boundedness of $\{v_k\}$.
Then, up to a subsequence, we may assume that there exists $v\in H^1(M)$ such that $v_k \rightharpoonup v$ weakly in $H^1(M)$. Hence, by Lemma \ref{our embedding} $v_k\to v$ in $L^{p+1}(M;W)$.\par
By $(f3)-(f4)$ we deduce that
$$
0\le h(r,s)s\le \frac{W(r)}p|s|^{p+1} \qquad \text{for any } r>0 \ \text{and} \ s\in \R
$$
and by the strong convergence $v_k\to v$ in $L^{p+1}(M;W)$ we obtain
\begin{align} \label{eq:passaggio}
& \lim_{k\to +\infty} \int_M h(r,v_k)v_k \, dV_g=\int_M h(r,v)v \, dV_g \, , \\
\notag & \lim_{k\to +\infty} \int_M h(r,v_k)w \, dV_g=\int_M h(r,v)w \, dV_g \qquad \text{for any } w\in H^1(M) \, .
\end{align}
By \eqref{eq:passaggio} and the fact that $\{v_k\}$ is a weakly convergent Palais-Smale sequence, we infer
\begin{align} \label{eq:conv-norms}
& \lim_{k\to +\infty} \left(\int_M |\nabla_g v_k|_g^2 \, dV_g-\lambda\int_M v_k^2 \, dV_g\right)\\
&=\lim_{k\to +\infty} \left(\langle \Phi'(v_k),v_k\rangle+\int_M h(r,v_k)v_k \, dV_g\right) \\
\notag \qquad & =\int_M h(r,v)v \, dV_g=\lim_{k\to +\infty} \left(\langle \Phi'(v_k),v\rangle+\int_M h(r,v)v \, dV_g\right)\\
\notag \qquad & =\int_M |\nabla_g v|_g^2 \, dV_g-\lambda\int_M v^2 \, dV_g \, .
\end{align}
Since $\lambda<\lambda_1(M)$ then the map $w\mapsto \left(\int_M |\nabla_g w|_g^2 \, dV_g-\lambda\int_M w^2 \, dV_g\right)^{1/2}$ is an equivalent norm in $H^1(M)$ and hence \eqref{eq:conv-norms}, together with the weak convergence $v_k \rightharpoonup v$, yields $v_k \to v$ strongly in $H^1(M)$. This completes the proof of the lemma.
\end{proof}

\textbf{Proof of Theorem \ref{compact 1}.}
Let $\{w_k\}\subset \mathcal N$ be a minimizing sequence for $\Phi$ on $\mathcal N$ and for any $k\in \N$ define $\widehat w_k:=w_k/\|w_k\|_{H^1}$. Then $\{\widehat w_k\}$ is a minimizing sequence for $\Psi$ on $S$. By a classical argument based on the Ekeland Variational Principle we deduce that there exists a sequence $\{v_k\}\subset S$ such that $\Psi(v_k)\le \Psi(\widehat w_k)$, $\|\widehat w_k-v_k\|_{H^1}\to 0$ and $\Psi'(v_k)\to 0$ as $k\to +\infty$ (see for example \cite[Corollary A3]{costa} for more details). This means that $\{v_k\}$ is a Palais-Smale sequence for $\Psi$ on $S$ and hence by Lemma \ref{l:relation-PS} (i) it follows that
$\{u_k\}\subset \mathcal N$ is a Palais-Smale sequence for $\Phi$ where we put $u_k:=m(v_k)$ for any $k\in \N$. By Lemma \ref{l:PS-validity} we deduce that up to a subsequence $\{u_k\}$ strongly converges in $H^1(M)$ to a function $u\in H^1(M)$. This proves that $u\in \mathcal N$, $u$ is a minimizer for $\Phi$ on $\mathcal N$ and by Lemma \ref{l:relation-PS} (iii) $u$ is a nontrivial critical point for $\Phi$. \hfill $\Box$\par\vskip3mm



\bigskip

\textbf{Proof of Proposition \ref{p:non-ex}.}  We follow closely the argument used in the proof of Theorem 1.1 in \cite{mancini}. For completeness we give here the details. First of all using a standard truncation argument one can show that the space $C^\infty_c(M)$ of $C^\infty$ functions with compact support is dense in $H^1(M)$. Then we show that $\ds{\lim_{R\to +\infty} \lambda_1(B_R)=\lambda_1(M)}$ with
$$
\ds{\lambda_1(B_R):=\inf_{v\in H^1_0(B_R)\setminus \{0\}} \frac{\int_{B_R} |\nabla_g v|_g^2 \, dV_g}{\int_{B_R} v^2 \, dV_g}} \, .
$$
Clearly the map $R\mapsto \lambda_1(B_R)$ is nonincreasing and $\lambda_1(M)\le \lambda_1(B_R)$ for any $R>0$ and moreover by density of $C^\infty_c(M)$ in $H^1(M)$ it follows that for any $\eps>0$ there exists $v\in C^\infty_c(M)$ such that
$$
\frac{\int_{M} |\nabla_g v|_g^2 \, dV_g}{\int_{M} v^2 \, dV_g}<\lambda_1(M)+\eps \, .
$$
Taking $R>0$ large enough such that $\text{supp } v\subset B_R$ we obtain $\lambda_1(B_R)<\lambda_1(M)+\eps$.

Using the same argument introduced in the proof of Theorem \ref{main} it is not restrictive assuming that \eqref{equa} admits a positive solution $u\in H^1(M)$. For any $R>0$ denote by $\varphi_{1,R}\in H^1_0(B_R)$ a positive eigenfunction of $-\Delta_g$ corresponding to the first eigenvalue $\lambda_1(B_R)$. Then by $(f4)$ and integration by parts we obtain
\begin{align*}
0 & \le \int_{B_R} h(r,u)\varphi_{1,R} \, dV_g\\
& =\int_{B_R} (-\Delta_g \varphi_{1,R}-\lambda \varphi_{1,R})u \, dV_g+\psi^{n-1}(R)\int_{{\mathbb S}^{n-1}} u(R,\Theta) \frac{\partial \varphi_{1,R}}{\partial r}(R,\Theta)\, d\Theta\\
& \le [\lambda_1(B_R)-\lambda] \int_{B_R} u\varphi_{1,R} \, dV_g
\end{align*}
which yields $\lambda_1(B_R)\ge \lambda$ being $\int_{B_R} u\varphi_{1,R} \, dV_g>0$ . Passing to the limit as $R\to +\infty$ we obtain $\lambda_1(M)\ge \lambda$, a contradiction. \hfill $\Box$\par\vskip3mm

\section{Proof of Proposition \ref{p:existence}} \label{s:existence-2}

Since $V_\infty<\lambda_1(M)$ the Hilbert space $H^1(M)$ may be endowed with the following equivalent norm
$$
\|u\|_{*}:= \left(\int_{M} |\nabla_g u |_{g}^2\,dV_{g}-V_{\infty} \int_{M} u ^2\,dV_{g}\right)^{1/2} \, .
$$
We also introduce the functional
$$
\Phi_{\infty} (u)= \frac{1}{2} \|u\|_{*}^2-\int_M H(u)\,dV_{g}
$$
and then corresponding Nehari manifold
$$
\mathcal N_\infty:=\{v\in H^1(M)\setminus\{0\}:\langle \Phi'_\infty(u),u\rangle=0\} \, .
$$
We also put $c_\infty:=\ds{\inf_{v\in \mathcal N_\infty} \Phi_\infty(v)}$.

We observe that the assumptions \eqref{eq:bounds-V}-\eqref{eq:AR-h} are completely similar to $(f2)-(f4)$ of Theorem \ref{compact 1} so that one may follow closely the first part of the proof of Theorem \ref{compact 1} and prove the existence of a bounded Palais-Smale sequence $\{u_k\}\subset \mathcal N$ for $\Phi$ such that  $\Phi(u_k)\to c$. Up to subsequences we may suppose that $\{u_k\}$ is weakly convergent in $H^1(M)$.

We divide the proof into two cases.

{\bf Case 1.} Suppose that $u_k\rightharpoonup u$ with $u\not\equiv 0$. Then in a standard way one can show that $u$ is a nontrivial solution of \eqref{eq:V(x)}. This means that $u\in \mathcal N$. It remains to show that $u$ is a least energy solution of \eqref{eq:V(x)}. Since $\{u_k\}$ is a bounded Palais-Smale sequence, by \eqref{eq:AR-h} and the Fatou Lemma we have
\begin{align*}
c&=\lim_{k\to +\infty} \left(\Phi(u_k)+\frac 12 \langle \Phi'(u_k),u_k\rangle\right)=\lim_{k\to +\infty} \frac 12 \int_M [h(u_k)u_k-2H(u_k)] \, dV_g \\
& \ge \frac 12 \int_M [h(u)u-2H(u)] \, dV_g=\Phi(u)+\frac 12 \langle \Phi'(u),u \rangle=\Phi(u) \, .
\end{align*}
This proves that $u$ is a least energy solution.

{\bf Case 2.} Suppose that $u_k \rightharpoonup 0$. By the continuous embedding $H^1(M)\subset L^{2^*}(M)$ we also have $u_k \rightharpoonup 0$ in $L^{p+1}(M)$ with $p$ as in \eqref{eq:subcrit}. We claim that $u_k\not\rightarrow 0$ in $L^{p+1}(M)$. By \eqref{eq:subcrit} we deduce that for any $\eps>0$ there exists $C_\eps$ such that
\begin{equation} \label{eq:h-eps-Ceps}
|h(s)s|\le \eps s^2+C_\eps |s|^{p+1} \, .
\end{equation}
If we suppose by contradiction that $u_k\to 0$ in $L^{p+1}(M)$ then by \eqref{eq:h-eps-Ceps} with $\eps<\lambda_1(M)-\sup V$ we obtain
\begin{align*}
\limsup_{k\to +\infty} & \left(\int_M |\nabla_g u_k|_g^2 \, dV_g-(\sup V+\eps)\int_M u_k^2 \, dV_g\right)\\
& \le \limsup_{k\to +\infty} \left(\langle \Phi'(u_k),u_k\rangle + C_\eps \int_M |u_k|^{p+1} \, dV_g\right)=0
\end{align*}
thus proving that $u_k\to 0$ strongly in $H^1(M)$. This is absurd since $\mathcal N$ is bounded away from zero as one can show proceeding as in the proof of Lemma \ref{l:nehari-1} (iv). In the geometric assumptions of this proposition, by \cite[Lemma 2.26]{au} there exists a uniformly locally finite covering of $M$ by balls of a fixed radius. Then we can apply  \cite[Lemma 2.1.2]{marzuola} to deduce that there exist $\delta>0$ and $\{y_k\}\subset M$ such that
\begin{equation*}
\int_{B(y_k,1)} u_k^2 \, dV_g\ge \delta
\end{equation*}
where $B(y_k,1)$ denotes the geodesic ball of radius $1$ centered at $y_k$. The sequence $\{y_k\}$ is necessarily unbounded since otherwise we would have
$u_k \rightharpoonup u\neq 0$. Hence it is not restrictive assuming, up to subsequences, that $d(y_k,o)\to +\infty$ where $o$ is a point of $M$ fixed arbitrarily.
Since $M$ is weakly homogeneous there exists a sequence of isometries $\{\gamma_k\}$ such that $d(\gamma_k(y_k),o)\le D$ for some $D>0$. Then, for any $k\in \N$, we define $\widehat u_k(x):=u_k(\gamma_k^{-1}(x))$ so that $\{\widehat u_k\}$ is bounded in $H^1(M)$ and $\int_{B(o,D+1)} \widehat u_k^2 \, dV_g\ge \delta$.
This shows that $\widehat u_k\rightharpoonup \widehat u\neq 0$ in $H^1(M)$ up to subsequences.

We first complete the proof of the theorem when $V$ is a constant function. In this way $\{\widehat u_k\}$ is still a Palais-Smale sequence for $\Phi$ such that $\Phi(\widehat u_k)\to c$ and its weak limit $\widehat u$ is nontrivial. We found a nontrivial critical point of $\Phi$ which is also a least energy solution as one can prove by proceeding as in Case 1.

It remains to consider the general case when $V$ is not necessarily constant. By \eqref{eq:V(infty)} we deduce that $\widehat u$ is a nontrivial critical point for the functional $\Phi_\infty$ and in particular $\widehat u\in \mathcal N_\infty$ and $\Phi_\infty(\widehat u)\ge c_\infty$. Therefore by \eqref{eq:AR-h} and Fatou Lemma
\begin{align} \label{eq:c<=c-infty}
c&=\lim_{k\to +\infty} \left(\Phi(u_k)-\frac{1}{2}\langle \Phi'(u_k),u_k\rangle\right)=\lim_{k\to +\infty} \frac 12  \int_M\left[h(u_k)u_k-2H(u_k)\right] \,dV_g \\
\notag & = \lim_{k\to +\infty} \frac 12 \int_M\left[h(\widehat{u}_k)\widehat{u}_k-2H(\widehat{u}_k)\right]\,dV_g\geq \frac 12 \int_M\left[h(\widehat u)\widehat u-2H(\widehat u)\right]\,dV_g\\
\notag &=\Phi_{\infty}(\widehat u)\geq c_{\infty} \, .
\end{align}
On the other hand, by \eqref{eq:bounds-V} and \eqref{eq:AR-h} we have that $c$ and $c_\infty$ admits the following minimax characterization
\begin{equation} \label{eq:minimax}
c=\inf_{v\in H^1(M)\setminus \{0\}} \max_{t>0} \Phi(tv) \quad \text{and} \quad c_\infty=\inf_{v\in H^1(M)\setminus \{0\}} \max_{t>0} \Phi_\infty(tv)
\end{equation}
see Lemma \ref{l:nehari-1} (iii) for more details. But $\Phi(v)\le \Phi_\infty(v)$ for any $v\in H^1(M)$ and hence by \eqref{eq:minimax} we obtain $c\le c_\infty$ which combined with \eqref{eq:c<=c-infty} gives $c=c_\infty$.

As shown above in the case $V$ constant, the functional $\Phi_\infty$ admits a least energy solution $w\in \mathcal N_\infty$. Proceeding as in Lemma \ref{positivity} on can show that $w$ does not change sign and moreover it is either strictly positive or strictly negative as a consequence of \eqref{eq:AR-h} and the strong maximum principle for variational solutions, see \cite[Section 8.7]{Gilb}. Therefore, if $V$ is not constant, i.e. $V>V_\infty$ on set of positive measure, we have
\begin{align*}
\Phi(w)&=\int_{M} |\nabla_g w|_g^2 \, dV_g-\int_M V(x)w^2 \, dV_g-\int_M H(w)\, dV_g\\
& <\int_{M} |\nabla_g w|_g^2 \, dV_g-V_\infty \int_M w^2 \, dV_g-\int_M H(w)\, dV_g=\Phi_\infty (w)=c_\infty  \, .
\end{align*}
This implies $c\le \Phi(w)<c_\infty$ a contradiction. \hfill $\Box$\par\vskip3mm

\bigskip

\section{Proof of Proposition \ref{p:constant-K}} \label{s:K-constant}

In the sequel we use the following notations:
\begin{align*}
B(x,R):=\{y\in M: d(y,x)<R\} \, , \qquad S(x,R):=\{y\in M:
d(y,x)=R\} \, .
\end{align*}
where $d$ is the geodesic distance.

We first prove that the scalar curvature $K$ is necessarily constant in $M$. Let $D>0$ and $\Gamma$ be as in Definition \ref{d:weakly-hom}.
Let $\rho>D$ be fixed arbitrarily and let $y\in
M$ be such that $d(o,y)=\rho$. Then there exists $\gamma\in \Gamma$ such
that $d(o,\gamma(y))\le D$. Moreover $\gamma(o)\neq o$ and $d(\gamma(o),o)\ge
\rho-D$. Indeed if we assume by contradiction that
$d(\gamma(o),o)<\rho-D$ then we would have
\begin{align*}
\rho=d(y,o)=d(\gamma(y),\gamma(o))\le d(\gamma(y),o)+d(o,\gamma(o))<D+\rho-D=\rho
\end{align*}
and this is absurd. Let $x:=\gamma(o)\neq o$ and denote by $\rho_x$ the
distance of $x$ from the pole. Since $K$ is constant on $S(o,R)$
for any $R>0$ and since $\gamma$ is an isometry then $K$ is constant on
$S(x,R)$ for any $R>0$. We denote by $K_R$ this constant value.

Let $0<R<\rho_x$ be fixed arbitrarily and consider the ball
$B(x,R)$. Let $z\in B(x,R)$, $\rho_z:=d(o,z)$.
We observe that
\begin{align*}
& B(x,R)\cap \{tx:t\in \R \}=\left\{\frac{t}{\rho_x}\, x:t\in (\rho_x-R,\rho_x+R)\right\}
\end{align*}
and hence $-\frac{\rho_z}{\rho_x} x\not\in B(x,R)$; but $-\frac{\rho_z}{\rho_x} x \in S(o,\rho_z)$ so that $S(o,\rho_z)\not\subseteq B(x,R)$ and in particular
$S(o,\rho_z)\cap S(x,R)\neq \emptyset$.

Since $K$ is constant on $S(x,R)$ and on $S(o,\rho_z)$ and $z\in S(o,\rho_z)$ then
$K(z)=K_R$. We have proved that $K(z)=K_R$ for any $z\in B(x,R)$
and for any $0<R<\rho_x$. Moreover we also have $K(z)=K_R$ for any
$z\in B(o,R)$ and for any $0<R<\rho_x$ thanks to the rotational symmetry of $(M,g)$ with respect to $o$. In particular $K_R=K(o)$ for any $R\in (0,\rho_x)$ and hence $K(z)=K(o)$ for any $z\in B(o,\rho_x)$. But we recall that $\rho_x\ge \rho-D$ and hence one may find $x$ such that $\rho_x$ is arbitrarily large simply choosing
$\rho$ large enough. In this way one proves that $K$ is a constant $\kappa$ over all $M$.

By $(H)$ and \eqref{eq:scalar} we then have
\begin{equation} \label{eq:unique}
\begin{cases}
\ds{2\psi''(r)+(n-2)\frac{(\psi'(r))^2-1}{\psi}+\beta\psi(r)=0 }\\
\psi(0)=0 \, ,\quad \psi'(0)=1\, , \quad \psi''(0)=0 \, .
\end{cases}
\end{equation}
where we put $\beta=\kappa/(n-1)$. We prove that \eqref{eq:unique} admits a unique solution.

Let $\psi_1,\psi_2$ be two solutions of \eqref{eq:unique}. Define $\phi_i(r):=\psi_i(r)/r$ for $i=1,2$ so that
\begin{equation}  \label{eq:unique-2}
\begin{cases}
\ds{\phi_i''(r)+\frac{n}r \phi_i'(r)+\frac{n-2}{2r^2}\frac{\phi_i^2(r)-1}{\phi_i(r)}+\frac{n-2}2 \frac{(\phi_i'(r))^2}{\phi_i(r)}+\frac{\beta}2\phi_i(r)=0} \\
\phi_i(0)=1\, , \quad \phi'_i(0)=0 \, .
\end{cases}
\end{equation}
Then we define $w(r)=\phi_1(r)-\phi_2(r)$ and we obtain
\begin{equation}  \label{eq:unique-3}
\begin{cases}
w''+\frac{n}r w'=-\frac{n-2}{2r^2}\frac{\phi_1(r)\phi_2(r)+1}{\phi_1(r)\phi_2(r)}w-\frac{n-2}2\left[ \frac{(\phi_1'(r))^2}{\phi_1(r)}-\frac{(\phi_2'(r))^2}{\phi_2(r)}\right]-\frac{\beta}2w \\
w(0)=0 \, , \quad w'(0)=0 \, .
\end{cases}
\end{equation}
Multiplying by $r^n$ and integrating we obtain
\begin{align*}
w'(r)=r^{-n}\int_0^r\left\{-\tfrac{n-2}{2}\tfrac{\phi_1\phi_2+1}{\phi_1\phi_2}t^{n-2}w-\tfrac{n-2}2\left[ \tfrac{(\phi_1')^2}{\phi_1}-\tfrac{(\phi_2')^2}{\phi_2}\right]t^n-\tfrac{\beta}2t^nw \right\}dt \, .
\end{align*}
Since $\lim_{t\to 0^+} \frac{\phi_1(t)\phi_2(t)+1}{\phi_1(t)\phi_2(t)}=2$ and since the function of two real variables $(z_1,z_2)\mapsto z_1^2/z_2$ is lipschitzian in a neighborhood of $(0,1)$, we deduce that for any $\eps>0$ there exist $C>0$ and $\delta>0$ such that for any $r\in (0,\delta)$ we have
\begin{align*}
|w'(r)|& \leq r^{-n}(n-2+\eps) \int_0^r t^{n-2} |w(t)|dt+C r^{-n} \int_0^r t^n \Big[|w(t)|+|w'(t)|\Big]dt\\
 & \le \left(\frac{n-2+\eps}{n-1} \frac 1r+\frac{C}{n+1} r\right) \sup_{t\in [0,r]} |w(t)|+C \int_0^r |w'(t)|dt\,.
\end{align*}
Since $w(t)=\int_0^t w'(s)ds$ we obtain
\begin{align*}
|w'(r)|\le \left(\frac{n-2+\eps}{n-1} \frac 1r+\frac{C}{n+1} r+C\right)  \int_0^r |w'(t)|dt \qquad \text{for any } r\in (0,\delta) \, .
\end{align*}
Up to shrinking $\delta$ if necessary we may assume that $\frac{n-2+\eps}{n-1} \frac 1r+\frac{C}{n+1} r+C<\frac{n-2+2\eps}{n-1}\frac 1r$ for any $r\in (0,\delta)$ in order to obtain
\begin{align} \label{eq:unique-4}
|w'(r)|\le \frac{n-2+2\eps}{n-1} \frac 1r  \int_0^r |w'(t)|dt \qquad \text{for any } r\in (0,\delta) \, .
\end{align}
If we put $h(r)=\int_0^r |w'(t)|dt$ then by \eqref{eq:unique-4} it follows
\begin{align*} 
h'(r)\le \frac{n-2+2\eps}{n-1} \frac 1r  h(r) \qquad \text{for any } r\in (0,\delta) \, .
\end{align*}
This implies that the map $r\mapsto r^{-\frac{n-2+2\eps}{n-1}} h(r)$ is nonincreasing in $(0,\delta)$ and hence if we choose $\eps=\frac 12$, for any $r\in (0,\delta)$ we have
\begin{equation*}
0\le r^{-1} h(r)\le \lim_{t\to 0^+} t^{-1} h(t)= \lim_{t\to 0^+} \tfrac{\int_0^t |w'(s)|ds}{t}=\lim_{t\to 0^+} |w'(t)|=0 \, .
\end{equation*}
This proves that $h\equiv 0$ in $(0,\delta)$ and hence, in turn, we also have $w'\equiv 0$; since $w(0)=0$ then $w\equiv 0$ in $(0,\delta)$ and
by the definition of $w, \phi_1,\phi_2$ we finally obtain $\psi_1\equiv \psi_2$ in $(0,\delta)$.
The fact that $\psi_1$ and $\psi_2$ coincide over all $(0,+\infty)$ follows immediately from classical uniqueness for Cauchy problems.

We claim that $\kappa\le 0$, otherwise if $\kappa>0$, by uniqueness, we would have $\psi(r)=\alpha^{-1} \sin(\alpha r)$ where we put
$\alpha=\sqrt{\frac{\kappa}{n(n-1)}}$, in contradiction with $(H)$. The two alternatives in Proposition \ref{p:constant-K} follow as well. \hfill $\Box$\par\vskip3mm

\bigskip\noindent
\textbf{Acknowledgements.} The authors are grateful to Antonio J. Di Scala and Gabriele Grillo for fruitful
discussions during the preparation of this paper.

\noindent
This work has been partially supported by the Research Project FIR (Futuro in Ricerca) 2013 \emph{Geometrical and qualitative aspects of PDE's}.

 \noindent
The authors are members of the Gruppo Nazionale per l'Analisi Matematica, la Probabilit\`a e le loro Applicazioni (GNAMPA) of the Istituto Nazionale di Alta Matematica (INdAM).

\bigskip

\bibliographystyle{amsplain} 
\end{document}